\newtheorem{theorem}{Theorem}[section]
\newtheorem{corollary}[theorem]{Corollary}
\newtheorem{lemma}[theorem]{Lemma}
\newtheorem{proposition}[theorem]{Proposition}
\theoremstyle{definition}
\newtheorem{remark}[theorem]{Remark}
\newcommand{\C}{\mathbb{C}} 
\newcommand{\R}{\mathbb{R}} 
\newcommand{\Z}{\mathbb{Z}} 
\newcommand{\Q}{\mathbb{Q}} 
\newcommand{\N}{\mathbb{N}} 
\newcommand{\M}{{\rm M}} 
\newcommand{\GL}{\rm GL} 
\newcommand{\Qp}{\mathcal{Q}} 
\newcommand{\abs}[1]{| #1 |} 
\newcommand{\set}[1]{\left\{#1\right\}} 
\newcommand{\mat}[4]{\begin{pmatrix}#1 & #2\\#3 & #4\end{pmatrix}} 
\newcommand{\inv}{^{-1}} 
\newcommand{\np}{_{n+1}}
\newcommand{\nm}{_{n-1}}
\newcommand{\an}{a_n}
\newcommand{\anp}{a\np}
\newcommand{\zn}{z_n}
\newcommand{\znp}{z\np}
\newcommand{\pn}{p_n}
\newcommand{\pnm}{p\nm}
\newcommand{\qn}{q_n}
\newcommand{\qnm}{q\nm}
\newcommand{\deln}{\delta_n}
\newcommand{\delnm}{\delta\nm}
\newcommand{\san}{\set{\an}}
\newcommand{\szn}{\set{\zn}}
\newcommand{\spn}{\set{\pn}}
\newcommand{\sqn}{\set{\qn}}
\newcommand{\sdeln}{\set{\deln}}
\begin{document}

\begin{frontmatter}





\title{Continued fraction expansions of complex numbers, Lagrange's theorem, and badly approximable numbers}


\author{S. G. Dani} 

\affiliation{organization={UM-DAE Centre for Excellence in Basic Sciences},
            addressline={University of Mumbai, Santacruz}, 
            city={Mumbai},
            postcode={400098}, 
            state={Maharashtra},
            country={India}}
            
     
            
\author{Ojas Sahasrabudhe} 

\affiliation{organization={Department of Mathematics, Indian Institute of Technology Bombay},
	addressline={Powai}, 
	city={Mumbai},
	postcode={400076}, 
	state={Maharashtra},
	country={India}}
	


\begin{abstract}
	This paper concerns extension of the classical Lagrange theorem, on the eventual periodicity of continued fraction expansions of quadratic surds, and the versions of it found in the literature in the case of complex numbers. In this respect, firstly, we adopt a more general notion of continued fraction expansions, in place of those arising from the nearest integer algorithms. Secondly, the issue is formulated in terms of zeros of quadratic and Hermitian forms, and a result is proved in terms of certain sequences of matrices associated with them, via continued fraction expansions. The result may be considered as a matrix analogue of Lagrange's theorem in the general framework. The unified approach leads to generalizations of the Lagrange theorem on one hand, and an extended version of a result of Hines (2019) on badly approximable complex numbers, on the other hand.
\end{abstract}



\begin{keyword}
	
	complex continued fractions \sep Lagrange's theorem \sep quadratic irrationals
	\sep badly approximable numbers \sep diophantine approximations
	



\end{keyword}

\end{frontmatter}


\newpage

\section{Introduction}\label{Intro}

A classical theorem of Lagrange asserts that a real number $\alpha$ is a quadratic irrational if and only if its sequence of partial quotients with respect to the simple continued fraction expansion is eventually periodic. The result was generalized by A.\,Hurwitz in \cite{Hur-A} to complex numbers, considering continued fraction expansions in terms of the Gaussian integers, using the nearest integer algorithm. There have been various generalizations of the ideas in recent years, involving other Euclidean subrings of $\C$ in place of the ring of Gaussian integers, and more general algorithms and procedures for producing continued fraction expansions; see in particular \cite{Lak}, \cite{Hen}, \cite{Hin}, \cite{DN}, \cite{D-gen} and~\cite{D-lazy}. 

In this paper we present a unified and more general approach to the issue. The generalization concerns, firstly, adopting the framework of general continued fraction expansions, as pursued in particular in \cite{DN} and \cite{D-gen}. Secondly, in the extended set up our main result (Theorem~\ref{finmat}) establishes a property of  certain sequences of matrices, which may be thought of as a matrix version of Lagrange's theorem. Theorem~\ref{finmat} enables us to deduce generalizations of results concerning zeroes of quadratic forms as well as of Hermitian forms. Apart from achieving a generalization to a broader framework, the approach thus presents a unified perspective for the two streams. Various examples are discussed in the context of the results on general continued fraction expansions of quadratic surds.

Application of Theorem~\ref{finmat} to quadratic forms yields substantially general versions of Lagrange's theorem for continued fraction expansions, with respect to more general algorithms as well non-algorithmic procedures; see Corollary~\ref{Lagrange2}. On the other hand, application of Theorem~\ref{finmat} to Hermitian forms enables us to generalize a result of R.\,Hines \cite{Hin}. In the case of real numbers the quadratic irrationals, which form a countable set, are the only known simple examples of badly approximable numbers (though the latter  form an uncountable set). The situation is different for complex numbers, and in fact, Hines \cite{Hin} showed that there exist circles in $\C$ all of whose points are badly approximable. Apart from generalizing Hines' result to the general context (Corollary~\ref{hbounded}), we furnish an explicitly defined class of circles in $\C$ all of whose points are badly approximable (Corollary \ref{badcircles}), and deduce from it that the set of well-approximable complex numbers is totally disconnected (Corollary~\ref{well-appr}). 

We begin, in the next section, with an introduction to the general framework for continued fraction expansions referred to above.

\section{Preliminaries}\label{Section:Prelim}
Let $\Gamma$ be a discrete Euclidean subring of $\C$ containing $1$, spanning $\C$; thus $\Gamma$ is one of $\Z[i]$ (Gaussian integers), $\Z[\sqrt{2} i]$, $\Z[\frac{1}{2} (1+\sqrt{3} i)]$ (Eisenstein integers), $\Z[\frac{1}{2} (1+\sqrt{7} i)]$ or $\Z[\frac{1}{2} (1+\sqrt{11} i)]$. We denote by $K$ the quotient subfield of $\Gamma$ in $\C$, and let $\C'=\C\setminus K$. For each $\Gamma$ we define $r_0(\Gamma)=\sup_{z\in \C}\inf_{\gamma\in \Gamma} |z-\gamma|$; we note that $r_0(\Gamma)<1$ for all $\Gamma$ as above. 

Let $\san_{n\geq 0}$ be a sequence in $\Gamma$. A sequence $\szn_{n\geq 0}$ in $\C$ is called an \emph{iteration sequence} corresponding to $\san$ if for all $n\geq 0$, $0<|\zn-\an|<1$ and $\znp=(\zn-\an)\inv$; we note that the conditions in particular imply that for all $n\geq 1$, $|\zn|>1$ and $\an\neq 0$.

Consider any $z\in \C'$. Then sequences $\san$, with iteration sequences $\szn$ associated with them can be constructed as follows:
As $\Gamma$ is a Euclidean subring spanning $\C$, and $z\in \C'$, there exist $a\in\Gamma$ such that $0<|z-a|<1$; we note that this is the reason for restricting $\Gamma$ to be a Euclidean subring spanning $\C$. There are multiple possibilities for this $a$, with the specific number depending on $\Gamma$ and the location of $z$, out of which we may pick any one as $a_0$, and put $z_1=(z-a_0)\inv$. Now $z_1\in\C'$ and hence we can similarly pick an $a_1$ such that $0<|z_1-a_1|<1$, and set $z_2=(z_1-a_1)\inv$. Continuing the procedure recursively in this way we get a pair of sequences $\san_{n\geq 0}$ and $\szn_{n\geq 0}$, with the latter as an iteration sequence for the former. In particular one may make the choice at each stage so that $\an=f(\zn)$, where $f:\C'\to \Gamma$ is a function such that $\abs{\zeta-f(\zeta)}<1$ for all $\zeta$; such a function is called an \emph{algorithm} for continued fraction expansions, and the pair of sequences $\san$ and $\szn$ we shall refer to as \emph{algorithmic continued fraction expansions}. One of the common algorithms consists of choosing $f(z)$ to be the element of $\Gamma$ which is nearest to $z$ (a suitable convention may be used to fix the choice when there are more than one nearest elements). 

To relate the above with the classical continued fractions we begin with some further notations. With any sequence $\san_{n\geq 0}$ in $\Gamma$ we associate a pair of sequences $\spn_{n\geq -1}$ and $\sqn_{n\geq -1}$ in $\Gamma$, called the \emph{associated $\Qp$-pair}, defined by 
$$ p_{-1}=1, \ \ p_0=a_0, \ \ p_{n} = a_{n} p_{n-1} + p_{n-2}, \text{ and}$$
$$ q_{-1}=0, \ \ q_0=1, \ \ q_{n} = a_{n} q_{n-1} + q_{n-2}, \text{ for } 
n\in \N.$$ 

\noindent We note that $\pn\qnm-\pnm\qn =(-1)^{n+1}$, for all $n\geq 0$, as may be proved inductively from the definitions as above. 

In general it can happen that $\qn=0$ for some $n\in \N$. However, for sequences $\san$ constructed with an iteration sequence, starting with a $z\in \C'$ as described above, it turns out that $\qn\neq 0$ for all $n\geq 1$ and 
$\{\frac{\pn}{\qn}\}$ converges to $z$, as $n\to \infty$; this was first proved in \cite{D-lazy} (where they were referred to as ``lazy'' continued fraction expansions) and an independent and somewhat more direct proof is given below (cf. Proposition~\ref{lazy1}\eqref{lazy1.1} and Proposition~\ref{qntoinfinity}).
The relations as above then signify that $z$ may be realized as $$z=a_0+\cfrac{1}{a_1+\cfrac{1}{a_2{+}_{\ddots}}}$$
in the spirit of classical continued fraction expansion; $\frac{\pn}{\qn}$ can be seen to equal truncated expressions from this, at the respective $n$'s, and the dots on the right hand side stand for the limit of the truncated expressions, with the equality of the two sides signifying association of the sequence $\san$ as continued fraction expansion of $z$; the sequence $\san$ in this case is called the \emph{sequence of partial quotients} corresponding to the continued fraction expansion. The ratios $\frac{p_n}{q_n}$ are called the \emph{convergents} corresponding to the sequence $\san$. We call the sequence $\sqn_{n\geq 0}$ the corresponding \emph{denominator sequence}.

Let $\san_{n\geq 0}$ be a sequence in $\Gamma$, with an iteration sequence $\szn$. We call $\sup_{n\geq 1} |\zn-\an|$ the \emph{radius} of the iteration sequence. We note that for $z\in\C'$ and any $1\geq r>r_0(\Gamma)$ (defined above) there exists an $a\in\Gamma$ such that $|z-a|<r$; thus the pairs $\san_{n\geq 0}$ and $\szn_{n\geq 0}$ can be constructed, with the latter an iteration sequence for the former, with radius at most $r$. We note that when the pair $\san$, $\szn$ is associated with an algorithm $f:\C'\to \Gamma$ then the radius of $\szn$ is at most $\sup_{\zeta \in \C'} \abs{\zeta -f(\zeta)}$.

\subsection{General properties of continued fraction expansions}

We begin by noting certain interrelations between various sequences associated with a continued fraction expansion, following the notations as above. 

\begin{proposition}\label{lazy1}
	Let $z\in \C'$, $\san_{n\geq 0}$ be a continued fraction expansion of $z$ with $\szn_{n\geq 0}$ the associated iteration sequence, and let $\spn_{n\geq -1}$, $\sqn_{n\geq -1}$ be the corresponding $\Qp$-pair. Then for all $n\geq 0$ the following statements hold: 
	\begin{enumerate}[(i)]
		\item \label{lazy1.1}
		$\qn z - \pn = (-1)^n (z_1\cdots\znp)\inv$; in particular $\qn\neq 0$;
		\item \label{lazy1.3}
		$(\znp\qn+\qnm) z = \znp\pn+\pnm$; 
		\item \label{lazy1.4}
		if $|\qnm|\leq|\qn|$ then $\abs{z-\frac{\pn}{\qn}} \leq \abs{\qn}^{-2}(|\znp| -\abs{\frac{\qnm}{\qn}})\inv$.
	\end{enumerate}
\end{proposition}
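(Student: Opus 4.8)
The plan is to establish (i) by induction on $n$ and then obtain (ii) and (iii) as essentially formal consequences. For (i), I would first record two consecutive base cases so that the two-term recursions of the $\Qp$-pair can drive the induction. At $n=0$ one has $q_0 z - p_0 = z-a_0 = z_1\inv$, matching $(-1)^0(z_1)\inv$; adopting the empty-product convention $z_1\cdots z_0=1$, the identity holds at $n=-1$ as well, both sides equalling $-1$. The inductive step is a direct manipulation: writing $q\np z - p\np = a\np(q_n z - p_n) + (q\nm z - p\nm)$ from the defining recursions, substituting the hypotheses at $n$ and $n-1$, factoring out $(z_1\cdots z\np)\inv$, and finally using $z\np - a\np = z_{n+2}\inv$ to absorb the leftover factor, reproduces the claimed identity at $n+1$. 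No genuine difficulty arises in this part.

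Since $|z_i|>1$ for all $i\ge 1$, the right-hand side of (i) is a nonzero number of modulus $|z_1\cdots z\np|\inv<1$, so $q_n z - p_n\neq 0$ and in fact $|q_n z - p_n|<1$. To deduce that $q_n\neq 0$ — which I expect to be the only step requiring a real idea rather than bookkeeping — I would argue by contradiction. If $q_n=0$, the inequality forces $\abs{p_n}<1$ with $p_n\in\Gamma$. But every nonzero element of $\Gamma$ has modulus at least $1$, because $\Gamma$ is an order in an imaginary quadratic field and hence the norm $\abs{\gamma}^2$ of a nonzero $\gamma\in\Gamma$ is a positive integer. Thus $p_n=0$, whence $q_n=p_n=0$ contradicts the relation $p_n q\nm - p\nm q_n=(-1)^{n+1}$ recorded earlier. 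Therefore $q_n\neq 0$.

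For (ii) the plan is to avoid a fresh induction and instead substitute (i) directly. Rewriting the target identity as $z\np(q_n z - p_n) + (q\nm z - p\nm)=0$ and inserting the expressions from (i) at indices $n$ and $n-1$, the factor $z\np$ cancels one factor of $z\np$ in the denominator, leaving $(-1)^n(z_1\cdots z_n)\inv + (-1)^{n-1}(z_1\cdots z_n)\inv$, which is zero. This is a one-line verification.

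Finally, (iii) follows from (ii) by a short computation. The reverse triangle inequality gives $\abs{z\np q_n + q\nm}\ge \abs{q_n}(\abs{z\np}-\abs{q\nm/q_n})$, and the hypothesis $\abs{q\nm}\le\abs{q_n}$ enters precisely to make the right-hand side positive, since then $\abs{q\nm/q_n}\le 1<\abs{z\np}$; in particular $z\np q_n + q\nm\neq 0$, so one may solve (ii) for $z$. Subtracting $p_n/q_n$ and using $p_n q\nm - p\nm q_n=(-1)^{n+1}$ to collapse the numerator to $(-1)^n$ yields
\[
\abs{z - \frac{p_n}{q_n}} = \frac{1}{\abs{q_n}\,\abs{z\np q_n + q\nm}},
\]
and inserting the lower bound for $\abs{z\np q_n + q\nm}$ gives the stated estimate. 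Thus the only conceptual point in the whole proposition is the modulus-at-least-one fact used to rule out $q_n=0$; everything else is routine algebra together with one application of the reverse triangle inequality.
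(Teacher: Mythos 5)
Your proposal is correct and follows exactly the route the paper indicates (the paper itself only sketches it, saying (i) is proved by induction and (ii), (iii) are deduced from it, deferring details to \cite{D-gen}): induction on $n$ for the identity in (i), the determinant relation $p_nq_{n-1}-p_{n-1}q_n=(-1)^{n+1}$ together with discreteness of $\Gamma$ to rule out $q_n=0$, and then (ii) and (iii) as formal consequences via substitution and the reverse triangle inequality. All steps check out, including the empty-product convention at the base case and the use of $|q_{n-1}|\leq|q_n|$ to make the lower bound on $|z_{n+1}q_n+q_{n-1}|$ positive.
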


\begin{proof} 
	The first statement may be proved inductively, and the others can be deduced from it via simple steps. We omit the details; see \cite{D-gen}, Proposition~2.1 for an idea of the proofs. 
\end{proof}

\begin{proposition}\label{qntoinfinity} 
	Let the notations be as in Proposition~\ref{lazy1}. Then $|\qn|\to\infty$, and $\frac{\pn}{\qn}\to z$, as $n\to \infty$. 
\end{proposition}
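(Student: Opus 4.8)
The plan is to reduce both assertions to the single claim that $\abs{\qn}\to\infty$. This reduction is immediate from Proposition~\ref{lazy1}\eqref{lazy1.1}: since $\qn z-\pn=(-1)^n(z_1\cdots\znp)\inv$, we have
\[
  \abs{z-\frac{\pn}{\qn}}=\frac{\abs{\qn z-\pn}}{\abs{\qn}}=\frac{1}{\abs{\qn}\prod_{k=1}^{n+1}\abs{z_k}}<\frac{1}{\abs{\qn}},
\]
the final inequality holding because $\abs{z_k}>1$ for every $k\ge 1$. Thus once $\abs{\qn}\to\infty$ is established, it follows simultaneously that $\frac{\pn}{\qn}\to z$, and so I would spend the whole proof on the growth of $\abs{\qn}$.

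To prove $\abs{\qn}\to\infty$ I would argue by contradiction, exploiting the discreteness of $\Gamma$. If $\abs{\qn}$ does not tend to infinity, then $\abs{\qn}$ stays bounded along some subsequence $(q_{n_j})$. A bounded subset of the discrete ring $\Gamma$ is finite, so after refining the subsequence I may assume $q_{n_j}=q^{*}$ for a fixed $q^{*}\in\Gamma$, with $q^{*}\neq 0$ since each $\qn\neq 0$ by \eqref{lazy1.1}. Writing $W_m=z_1\cdots z_m$, formula \eqref{lazy1.1} evaluated at $n=n_j$ gives $q^{*}z-p_{n_j}=(-1)^{n_j}W_{n_j+1}\inv$, where $\abs{W_{n_j+1}\inv}=\prod_{k=1}^{n_j+1}\abs{z_k}\inv<1$. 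Hence $\abs{p_{n_j}}\le\abs{q^{*}}\,\abs{z}+1$, so the $p_{n_j}$ are bounded as well; refining once more, I may assume $p_{n_j}=p^{*}$ is also constant.

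The contradiction then comes from the strict monotonicity of the moduli $\abs{W_m}$. With $q^{*}$ and $p^{*}$ fixed, the identity $q^{*}z-p^{*}=(-1)^{n_j}W_{n_j+1}\inv$ holds for every $j$. The left side is one fixed complex number, while for $j\neq j'$ the right sides have moduli $\prod_{k=1}^{n_j+1}\abs{z_k}\inv$ and $\prod_{k=1}^{n_{j'}+1}\abs{z_k}\inv$, which are unequal because every factor $\abs{z_k}$ exceeds $1$ and $n_j\neq n_{j'}$. A single number cannot have two different moduli, so the identity fails for some pair of indices---the required contradiction. Therefore $\abs{\qn}\to\infty$, and the proposition follows.

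The step I expect to be the crux is this very last one. The naive hope would be to show directly that $\prod_{k}\abs{z_k}$ diverges, which would force $\qn z-\pn\to 0$; but because we permit non-algorithmic expansions the radius $\sup_n\abs{\zn-\an}$ may equal $1$, the $\abs{z_k}$ may tend to $1$, and I do not expect this product to diverge in general. The argument above is designed to sidestep that issue: it uses only the strict monotonicity of $\abs{W_m}$ (immediate from $\abs{z_k}>1$) together with the discreteness of $\Gamma$, extracting constant values of both $q_{n_j}$ and $p_{n_j}$ before playing monotonicity against the impossibility of a fixed number having two moduli. The care required is to perform the two successive refinements of the subsequence and to note that $(-1)^{n_j}W_{n_j+1}\inv$ never vanishes, so that the fixed value $q^{*}z-p^{*}$ can be matched to it for at most one index.
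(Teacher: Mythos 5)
Your proposal is correct and follows essentially the same route as the paper: both reduce everything to $\abs{\qn}\to\infty$ via Proposition~\ref{lazy1}\eqref{lazy1.1}, and both derive that by using discreteness of $\Gamma$ to extract a subsequence with $q_{n_j}$ and $p_{n_j}$ constant, then contradict the strict decrease of $\abs{z_1\cdots z_{m}}^{-1}$. The paper phrases the final contradiction as $\abs{qz-p}<\abs{qz-p}$ while you phrase it as one number having two distinct moduli, but these are the same observation.
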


\begin{proof}
	Suppose to the contrary that $\sqn$ has a convergent subsequence. Since $\Gamma$ is discrete there exists a sequence $\{n_k\}$ tending to infinity, and a $q\in \Gamma$ such that $q_{n_k}=q$ for all $k$. Then by Proposition~\ref{lazy1}\eqref{lazy1.1} there are only finitely many possibilities for $p_{n_k}$, and by passing to a subsequence we may assume that $p_{n_k}=p\in \Gamma$ for all $k$. Then for any $k$ we have $\abs{qz-p} = \abs{q_{n_{k+1}}z-p_{n_{k+1}}} = \abs{z_1\cdots z_{n_{k+1}+1}}\inv < \abs{z_1\cdots z_{n_{k}+1}}\inv = \abs{q_{n_{k}}z-p_{n_{k}}} = \abs{qz-p},$
	which is a contradiction. 
	Hence $|\qn|\to \infty$ and $n\to \infty$. As $|\zn|>1$ for all $n\geq 1$, it follows from this, together with Proposition~\ref{lazy1}\eqref{lazy1.1}, that $\frac{\pn}{\qn}\to z$. 
\end{proof}

\subsection{Sequence of relative errors and neat subsets}

In this subsection we introduce certain notions concerning continued fraction expansions and discuss certain simple properties that play a crucial role in our proofs. 

Let $\Gamma$, $K$ and $\C'=\C\setminus K$ be as before. Let $z\in \C'$ and $\san_{n\geq 0}$ be a continued fraction expansion of $z$ over $\Gamma$. Let $\szn_{n\geq 0} $ be the associated iteration sequence and $\spn_{n\geq -1}$, $\sqn_{n\geq -1}$ the corresponding $\Qp$-pair. 

The sequence $\deln$, $n\geq 0$ defined by 
$$\deln:=\qn(\qn z-\pn) =\qn^2\left(z-\dfrac{\pn}{\qn}\right)$$
is called the \emph{sequence of relative errors} corresponding to the expansion.

\begin{proposition}\label{theta1}
	With the notations as above, for $n\geq 1$, if $|\qnm| \leq |\qn|$, then the following statements hold:
	
	(i) $|\deln| \leq (|\znp|-1)\inv$ and 
	(ii) $|\delnm| \leq |\deln|+1$.
\end{proposition}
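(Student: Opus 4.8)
The plan is to treat both parts as short consequences of the identities already recorded in Proposition~\ref{lazy1}, together with the cross-product relation $\pn\qnm-\pnm\qn=(-1)^{n+1}$. Throughout, the hypothesis $\abs{\qnm}\leq\abs{\qn}$ enters only through the single consequence $\abs{\qnm/\qn}\leq 1$. The one rewriting I keep in mind is that, from the definition $\deln=\qn(\qn z-\pn)$, one has $\qn z-\pn=\deln/\qn$ and likewise $\qnm z-\pnm=\delnm/\qnm$ (using $\qn,\qnm\neq 0$ from Proposition~\ref{lazy1}\eqref{lazy1.1}).

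For part (i) I would appeal to Proposition~\ref{lazy1}\eqref{lazy1.4} directly. Since $\abs{\qnm}\leq\abs{\qn}$, that proposition bounds $\abs{z-\pn/\qn}$ by $\abs{\qn}^{-2}(\abs{\znp}-\abs{\qnm/\qn})\inv$, and multiplying through by $\abs{\qn}^2$ cancels the factor $\abs{\qn}^{-2}$, giving $\abs{\deln}\leq(\abs{\znp}-\abs{\qnm/\qn})\inv$. It then remains only to weaken the denominator: because $\abs{\znp}>1$ (a standing property of iteration sequences) and $\abs{\qnm/\qn}\leq 1$, the quantity $\abs{\znp}-\abs{\qnm/\qn}$ is positive and is at least $\abs{\znp}-1$, so taking reciprocals reverses the inequality and yields $\abs{\deln}\leq(\abs{\znp}-1)\inv$.

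For part (ii) the idea is to manufacture an \emph{additive} recursion linking $\delnm$ to $\deln$. Expanding $\qnm(\qn z-\pn)-\qn(\qnm z-\pnm)$, the terms in $z$ cancel and the cross-product relation leaves precisely $(-1)^n$. Substituting $\qn z-\pn=\deln/\qn$ and $\qnm z-\pnm=\delnm/\qnm$, and then multiplying through by $\qnm/\qn$, converts this into
$$\delnm=\frac{\qnm^2}{\qn^2}\,\deln-(-1)^n\,\frac{\qnm}{\qn}.$$
Taking moduli and using $\abs{\qnm/\qn}\leq 1$ (hence also $\abs{\qnm^2/\qn^2}\leq 1$) gives $\abs{\delnm}\leq\abs{\deln}+1$, as required.

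The computations here are routine, and I do not expect a genuine obstacle. The only real choice is the algebraic one in part (ii): hitting on the combination $\qnm(\qn z-\pn)-\qn(\qnm z-\pnm)$ that simultaneously annihilates $z$ and reproduces the cross-product term, and then scaling by $\qnm/\qn$ so that the hypothesis $\abs{\qnm}\leq\abs{\qn}$ is exactly what keeps both resulting coefficients bounded by $1$. Once that combination is identified, both parts reduce to verifying these two elementary manipulations.
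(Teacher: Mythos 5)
Your proof is correct and follows essentially the same route as the paper: part (i) is exactly the paper's one-line deduction from Proposition~\ref{lazy1}\eqref{lazy1.4}, and part (ii) arrives at the identical identity $\delnm=(\qnm/\qn)^2\deln+(-1)^{n+1}(\qnm/\qn)$ that the paper uses, differing only in the algebraic bookkeeping (you cancel $z$ via the combination $\qnm(\qn z-\pn)-\qn(\qnm z-\pnm)$, while the paper expands $\qn^2\delnm$ directly). No gaps.
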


\begin{proof} 
	(i) This is a direct consequence of Proposition~\ref{lazy1}\eqref{lazy1.4} and the condition in the hypothesis.
	
	(ii) We have
	$$\qn^2\delnm=\qn^2(\qnm^2 z-\pnm\qnm)=\qnm^2\deln+\qnm\qn (\pn\qnm-\pnm\qn).$$ 
	Dividing by $\qn^2$, and recalling that $\pn\qnm-\pnm\qn=(-1)^{n+1}$, 
	we get $$\delnm=\deln\left(\dfrac{\qnm}{\qn}\right)^2+(-1)^{n+1} \left(\dfrac{\qnm}{\qn}\right).$$ 
	Thus when $|\qnm| \leq |\qn|$ we get $|\delnm| \leq |\deln|+1.$
\end{proof}

We say that a subset $N$ of $\N$ is \emph {neat} for the continued fraction expansion if 
$$ \sup_{n\in N}|\zn-\an|<1 ~ \hfill{\text{ and }} ~ |\qnm| \leq |\qn| \text{ for all } n\in N. $$

\begin{corollary}\label{theta2}
	Let $z\in \C'$, $\san_{n\geq 0}$ be a continued fraction expansion of $z$, with iteration sequence $\szn_{n\geq 0}$, and let $\sdeln_{n\geq 0}$ be the corresponding sequence of relative errors. Let $N \subseteq \N$ be a neat subset for the expansion. Then $\set{\deln \mid n\in N}$ and $\set{\delnm \mid n\in N}$ are bounded subsets of $\C$.
\end{corollary}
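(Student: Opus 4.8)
The plan is to read the statement as a uniform-over-$N$ version of Proposition~\ref{theta1}, and indeed the definition of a \emph{neat} set seems designed exactly so that both hypotheses of that proposition hold simultaneously and uniformly for $n\in N$. To begin, I would introduce the single constant $s:=\sup_{n\in N}|\zn-\an|$; by the first defining condition of neatness this satisfies $s<1$, so $(1-s)\inv$ is a finite positive number that will serve as the eventual bound.

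The key intermediate step is to convert the control on $|\zn-\an|$ into control on $|\znp|$. Using the defining relation $\znp=(\zn-\an)\inv$ of the iteration sequence, for every $n\in N$ we have $|\znp|=|\zn-\an|\inv\geq s\inv>1$, hence $|\znp|-1\geq (1-s)s\inv>0$ and therefore $(|\znp|-1)\inv\leq s(1-s)\inv$. The second defining condition of neatness, $|\qnm|\leq|\qn|$ for all $n\in N$, is precisely the hypothesis under which Proposition~\ref{theta1} is applicable at each such $n$. Applying part (i) then yields $|\deln|\leq(|\znp|-1)\inv\leq s(1-s)\inv$ for all $n\in N$, a bound independent of $n$, which shows that $\set{\deln\mid n\in N}$ is bounded. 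Feeding this into part (ii) gives $|\delnm|\leq|\deln|+1\leq s(1-s)\inv+1=(1-s)\inv$ for all $n\in N$, so $\set{\delnm\mid n\in N}$ is bounded as well.

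I do not expect a genuine obstacle: once Proposition~\ref{theta1} is in hand, the corollary is little more than taking a supremum. The only points that require care are bookkeeping ones. First, one must use each of the two conditions in the definition of neatness for its own distinct purpose --- the inequality $|\qnm|\leq|\qn|$ to license the use of Proposition~\ref{theta1}, and the bound on $|\zn-\an|$ to make the resulting estimate uniform. Second, it is essential that the supremum condition is \emph{strict}, $s<1$ rather than $s\leq 1$, since this is exactly what keeps $|\znp|$ bounded away from $1$ and prevents $(|\znp|-1)\inv$ from blowing up; without strictness the argument would fail.
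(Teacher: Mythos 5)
Your proposal is correct and follows essentially the same route as the paper: set $s=\sup_{n\in N}|\zn-\an|<1$, deduce $|\znp|\geq s\inv$ from $\znp=(\zn-\an)\inv$, and then invoke the two parts of Proposition~\ref{theta1} (whose hypothesis $|\qnm|\leq|\qn|$ is supplied by neatness) to get the uniform bounds $|\deln|\leq s(1-s)\inv$ and $|\delnm|\leq (1-s)\inv$. The arithmetic and the division of labour between the two neatness conditions match the paper's proof exactly.
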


\begin{proof}
	Let $\szn_{n\geq 0}$ be the iteration sequence corresponding to $\san$ and $\sqn_{n\geq 0}$ be the associated denominator sequence. Since $N$ is neat, we have $M:=\sup_{n\in N}|\zn-\an|<1$. Thus for $n\in N$ we have $|\znp| =|\zn-\an|\inv > M\inv.$ Since $|\qnm|\leq |\qn|$, by Proposition~\ref{theta1} we have $|\deln| \leq (M\inv-1)\inv$ and $|\delnm|\leq (M\inv-1)\inv +1$. Thus $\set{\deln \mid n\in N}$ and $\set{\delnm \mid n\in N}$ are bounded subsets of $\C$.	
\end{proof}

\begin{proposition}\label{neatsubsets}
	Let $z\in \C'$ and $\san_{n\geq 0}$ be a continued fraction expansion of $z$ and $\szn_{n\geq 0}$ be the corresponding iteration sequence. Let $\sqn_{n\geq 0}$ be the associated denominator sequence. If either of the following conditions holds, then there exists an infinite neat subset $N$ for the expansion.
	\begin{enumerate}[(i)]
		\item \label{neatsubsets_sup} 
		$\limsup_{n\in\N} |\zn-\an|< 1$;
		\item \label{neatsubsets_mono} 
		$\abs{\qnm} \leq \abs{\qn}$ for all $n\in\N$.
	\end{enumerate} 
\end{proposition}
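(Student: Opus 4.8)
The plan is to handle the two sufficient conditions separately, since they are of quite different character. For condition~\eqref{neatsubsets_mono} the argument is essentially immediate: if $\abs{\qnm}\leq\abs{\qn}$ holds for \emph{all} $n\in\N$, then the monotonicity requirement in the definition of a neat set is automatic for every subset, so it remains only to secure the radius condition. I would take $N=\N$ if $\limsup\abs{\zn-\an}<1$, but under condition~\eqref{neatsubsets_mono} alone this need not hold, so the cleaner route is to observe that $\szn$ can be chosen so that $\sup_{n\geq 1}\abs{\zn-\an}<1$ using the construction in Section~\ref{Section:Prelim} (picking each $\an$ with $\abs{\zn-\an}<r$ for a fixed $r<1$ with $r>r_0(\Gamma)$); however since the expansion is \emph{given} here, I should instead fall back on the $\limsup$ mechanism of part~\eqref{neatsubsets_sup} and note that condition~\eqref{neatsubsets_mono} only frees us from the denominator inequality. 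The honest reading is that under~\eqref{neatsubsets_mono} we still need some control on $\abs{\zn-\an}$, so I expect the intended statement uses~\eqref{neatsubsets_mono} to waive the denominator condition while the radius condition is met along a suitable infinite subsequence; I would extract such a subsequence via the pigeonhole argument below.

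For condition~\eqref{neatsubsets_sup}, the first step is to set $\epsilon>0$ with $\limsup_n\abs{\zn-\an}<1-\epsilon$, so that the radius condition $\abs{\zn-\an}<1-\epsilon$ holds for all sufficiently large $n$, say $n\geq n_0$. This immediately gives a cofinite set on which the radius half of neatness holds; the work is entirely in arranging the denominator inequality $\abs{\qnm}\leq\abs{\qn}$ on an infinite subset. The key observation is that $\abs{\qn}\to\infty$ by Proposition~\ref{qntoinfinity}, so the sequence $\abs{\qn}$ cannot be eventually decreasing; more precisely, I would argue that the set of indices $n$ at which $\abs{\qn}$ attains a ``running maximum'' (i.e.\ $\abs{\qn}=\max_{0\leq k\leq n}\abs{\qk}$, or at least $\abs{\qn}\geq\abs{\qnm}$) is infinite.

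The cleanest way to produce infinitely many such indices is to define $N$ to consist of those $n$ for which $\abs{\qn}>\abs{\qk}$ for all $k<n$, the strict record-breakers of the sequence $\abs{\qn}$. Because $\abs{\qn}\to\infty$, for every threshold $T$ there is some index beyond which $\abs{\qn}>T$, and the \emph{first} such index is necessarily a record-breaker; letting $T\to\infty$ along the values already attained produces infinitely many distinct record indices, so $N$ is infinite. At each $n\in N$ we have in particular $\abs{\qnm}<\abs{\qn}$ (as $n-1<n$ and $n-1$ is an earlier index), giving the denominator condition, and for the $n_0$-tail of $N$ the radius condition holds by the choice of $\epsilon$. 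Discarding the finitely many elements of $N$ below $n_0$ leaves an infinite neat subset. I expect the main obstacle to be purely bookkeeping: confirming that $n-1$ indeed lies in the comparison range for a record index (it does, since records are compared against all strictly smaller indices) and making sure the finitely-many-small-indices truncation does not destroy infinitude, which it cannot since $N$ is infinite. For condition~\eqref{neatsubsets_mono}, the same record-index set $N$ works without even invoking $\abs{\qn}\to\infty$ for the denominator part, and I would combine it with whatever radius control is available; the truly delicate point, which I would flag explicitly, is that condition~\eqref{neatsubsets_mono} on its own still requires the radius bound, so its proof should reduce to the observation that a continued fraction expansion as constructed always satisfies $\sup_n\abs{\zn-\an}<1$ when the $\an$ are chosen with a uniform radius $r<1$, placing us back in the hypothesis of~\eqref{neatsubsets_sup}.
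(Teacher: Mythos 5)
Your treatment of condition~\eqref{neatsubsets_sup} is essentially the paper's argument: fix $r<1$ and $n_0$ with $\abs{\zn-\an}<r$ for all $n\geq n_0$, and use $\abs{\qn}\to\infty$ (Proposition~\ref{qntoinfinity}) to produce infinitely many indices $n\geq n_0$ with $\abs{\qnm}\leq\abs{\qn}$; the paper simply takes \emph{all} such indices as $N$, while your record-breaking indices form an infinite subset of these, so that part is correct.

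The genuine gap is in condition~\eqref{neatsubsets_mono}. You correctly observe that the denominator inequality is free there and that the real content is the radius condition $\sup_{n\in N}\abs{\zn-\an}<1$ on some infinite $N$, but you then do not prove it: you propose to reduce to the observation that the expansion satisfies $\sup_n\abs{\zn-\an}<1$ ``when the $\an$ are chosen with a uniform radius $r<1$.'' That is an additional hypothesis, not a consequence of~\eqref{neatsubsets_mono}; the definition of an iteration sequence only gives $\abs{\zn-\an}<1$ for each $n$ separately, and nothing a priori prevents $\abs{\zn-\an}\to 1$, in which case no infinite subset would satisfy the radius requirement and the proposition would fail for your argument. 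The missing ingredient is the nontrivial fact, cited in the paper's proof (Proposition~3.6 of \cite{DN}, or Theorem~2.1 of \cite{D-lazy}), that monotonicity of $\set{\abs{\qn}}$ forces $\limsup_{n}\abs{\zn}>1$. Granting this, one chooses $\alpha>1$ so that $N=\set{n\in\N\mid \abs{\znp}>\alpha}$ is infinite; since $\znp=(\zn-\an)\inv$, every $n\in N$ satisfies $\abs{\zn-\an}<\alpha\inv<1$, and the denominator inequality holds for all $n$ by hypothesis, so $N$ is an infinite neat subset. Without this input your argument for~\eqref{neatsubsets_mono} does not close.
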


\begin{proof}
	\eqref{neatsubsets_sup} The condition implies that there exist $0<r<1$ and $n_0$ such that for all $n\geq n_0$, $|\zn-\an|<r$. 
	Let $N=\set{n\geq n_0 \mid \abs{\qnm} \leq \abs{\qn}}$. 
	Since $\abs{\qn}\to \infty$ as $n\to \infty$, $N$ is infinite and the choices show that it is a neat subset. 
	
	\eqref{neatsubsets_mono} When $\abs{\qnm} \leq \abs{\qn}$ for all $n\in \N$, by Proposition 3.6 of \cite{DN} (or Theorem~2.1 of \cite{D-lazy}) we have $\limsup_{n\in\N} |\zn|>1$. Hence there exists $\alpha >1$ such that the set $N=\set{n\in\N \mid |\znp|>\alpha}$ is infinite, and it is a neat subset since $\znp =(\zn-\an)\inv.$
\end{proof}

\begin{remark}\label{neatsubsets_funda}
	We note that condition \eqref{neatsubsets_sup} as in Proposition~\ref{neatsubsets} holds for continued fraction expansions arising from the nearest integer algorithm. 
	More generally, for a continued fraction sequence corresponding to an algorithm $f:\C'\to \Gamma$, the condition holds whenever the fundamental set of $f$, namely $\set{z-f(z)\mid z\in\C'}$, is contained in $B(0,r):=\set{z\in \C \mid |z|<r}$ for some $r<1$. Thus an infinite neat subset exists in these cases. Similarly, neat infinite subsets exist when the corresponding iteration sequence has radius $r<1$. 
\end{remark}

The examples in the preceding remark rely on using condition~\eqref{neatsubsets_sup} from Proposition~\ref{neatsubsets}. To get examples via use of condition~\eqref{neatsubsets_mono} of the proposition, one should know suitable criteria for it to hold. An analysis in this respect, when $\Gamma$ is either the ring of Gaussian integers or the ring or Eisenstein integers was carried out in \cite{D-gen}. We recall in particular the following criterion from \cite{D-gen}; see Corollary~\ref{Lagrange2} for an application relating to the Lagrange theorem. 

\begin{proposition}\label{mono} {\rm (\cite{D-gen}, Theorem~3.2)}
	Let $z\in \C'$ and $\san_{n\geq 0}$ be a continued fraction expansion of $z$ over $\Gamma$ with $\sqn_{n\geq 0}$ the associated denominator sequence. Suppose that, for all $n\geq 1$, $|\an|>1$ and the following holds: if $|\anp|=\sqrt{2}$ then $\abs{\an+\overline{\anp}}\geq 2$ and if $|\anp|=\sqrt{3}$ then $\abs{2\an+\overline{\anp}}\geq 3$. Then $|\qnm|<|\qn|$ for all $n\geq 1$. 
\end{proposition}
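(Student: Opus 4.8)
The plan is to prove the equivalent statement that the ratios $t_n:=q_{n-1}/q_n$ (well defined since $q_n\neq 0$ by Proposition~\ref{lazy1}\eqref{lazy1.1}) satisfy $|t_n|<1$ for all $n\geq 1$, by induction on $n$. Dividing the recurrence $q_n=a_nq_{n-1}+q_{n-2}$ by $q_{n-1}$ gives $t_n=(a_n+t_{n-1})\inv$, with the convention $t_0=q_{-1}/q_0=0$; thus $|t_n|<1$ is equivalent to $|a_n+t_{n-1}|>1$. The base case $n=1$ is immediate, since $t_1=a_1\inv$ and $|a_1|>1$. For the inductive step, assume $|t_m|<1$ for all $m<n$. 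When $|a_n|\geq 2$ the triangle inequality already gives $|a_n+t_{n-1}|\geq |a_n|-|t_{n-1}|>2-1=1$, so the only cases needing work are those with $1<|a_n|<2$. The first point to record is that for each admissible $\Gamma$ the values of the norm form force $|a_n|^2\in\{2,3\}$ in this range (a finite check, ring by ring); write $N=|a_n|^2=a_n\overline{a_n}$.

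For such $n$ (necessarily $n\geq 2$, the case $n=1$ being already settled) I would substitute $t_{n-1}=(a_{n-1}+t_{n-2})\inv$ and clear denominators after multiplying by $\overline{a_n}$, using $a_n\overline{a_n}=N$. Setting $w=a_{n-1}+t_{n-2}$, a short computation gives $\overline{a_n}(a_n+t_{n-1})=(\overline{a_n}+Nw)/w$, so the target inequality $|a_n+t_{n-1}|>1$ becomes $|\overline{a_n}+Nw|>\sqrt{N}\,|w|$. Squaring, expanding, and completing the square reduces this to the clean condition $|(N-1)w+\overline{a_n}|>1$. This reformulation is the heart of the argument: it is what makes the hypotheses appear.

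The final step is geometric. By the inductive hypothesis $t_{n-2}$ lies in the open unit disk, so $w=a_{n-1}+t_{n-2}$ ranges over $B(a_{n-1},1)$ and hence $(N-1)w+\overline{a_n}$ ranges over the open disk of radius $N-1$ centred at $(N-1)a_{n-1}+\overline{a_n}$. The required inequality $|(N-1)w+\overline{a_n}|>1$ for all admissible $w$ is then exactly the assertion that this open disk is disjoint from the closed unit disk, which holds precisely when the modulus of its centre is at least $(N-1)+1=N$. For $N=2$ this reads $|a_{n-1}+\overline{a_n}|\geq 2$ and for $N=3$ it reads $|2a_{n-1}+\overline{a_n}|\geq 3$; these are exactly the two hypotheses applied to the pair $(a_{n-1},a_n)$. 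This closes the induction.

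The main obstacle is the middle step: a crude triangle-inequality bound is hopeless when $|a_n|<2$, since $B(0,1)$ and $B(-a_n,1)$ then overlap, so one genuinely must exploit that $t_{n-1}$ is constrained to the image of the unit disk under $w\mapsto(a_{n-1}+w)\inv$ rather than being an arbitrary point of the unit disk. The device of multiplying by $\overline{a_n}$ (so that $a_n\overline{a_n}=N$ becomes a real constant) and then completing the square is what converts this constraint into the disk-disjointness condition matching the hypotheses, and guessing the correct coefficients $(N-1)$ and $N$ is the only non-routine part. I would also take care with the base cases and index ranges, since the hypotheses concern pairs $(a_{n-1},a_n)$ with $n\geq 2$, whereas $n=1$ must be (and is) disposed of directly.
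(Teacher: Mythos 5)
Your proof is correct. Note that the paper itself gives no proof of this proposition --- it is quoted from \cite{D-gen}, Theorem~3.2 --- so there is no in-paper argument to compare against; your induction on $t_n=q_{n-1}/q_n$, with the reduction of $|a_n+t_{n-1}|>1$ to $\bigl|(N-1)w+\overline{a_n}\bigr|>1$ (which checks out algebraically, since $|(N-1)w+\overline{a_n}|^2-1=(N-1)\bigl[(N-1)|w|^2+2\re(a_nw)+1\bigr]$ when $a_n\overline{a_n}=N$) and the disk-disjointness criterion $|(N-1)a_{n-1}+\overline{a_n}|\geq N$, is exactly the mechanism that produces the stated hypotheses for $N=2,3$, and is in the spirit of the cited source. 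The only ingredients left implicit --- the ring-by-ring verification that $1<|a_n|^2<4$ forces $|a_n|^2\in\{2,3\}$, and the fact that $q_n\neq 0$ so the ratios are defined --- are routine and you have flagged both.
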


We also note the following. While the algorithmic examples obtained using condition~\eqref{neatsubsets_sup} as discussed in Remark~\ref{neatsubsets_funda} necessarily involve the fundamental set of the algorithm being contained in $B(0,r)$ for some $r<1$, a variety of examples are possible where it is not contained in $B(0,r)$ for any $r<1$, where the conclusion as in Proposition~\ref{neatsubsets} can be upheld by verifying the monotonicity of $\set{|\qn|}$, where $\sqn_{n\geq 0}$ is the corresponding denominator sequence; this holds, for example, for the sequences corresponding to the nearest even integer algorithm treated by Julius Hurwitz in \cite{Hur-J}; a more detailed analysis, with a more general class of examples in this respect may be found in our preprint \cite{DS}, and we shall not go into them here on account of the bulk of the technical work involved.

\section{Zeroes of integral binary forms and quadratic polynomials}\label{Section:Hines}

Let $\Gamma$, $K$ and $\C'=\C\setminus K$ be as before. By $\C^2$ we shall denote the space of $2$-rowed column vectors, viewed also as $2\times 1$ matrices, with entries in $\C$; for convenience we shall also write the elements as $(\xi,\eta)^t$ with $\xi, \eta \in \C$ (with the $t$ standing for transpose). Also we consider functions defined on $\C\times\C$ as functions on $\C^2$ and vice-versa with this notation. By $\sigma :\C\to \C$ we denote the map which is either the identity map or the complex conjugation map. For any $\xi \in \C$ we denote $\sigma(\xi)$ by $\xi^\sigma$. 

We also associate with the continued fraction expansion a sequence of matrices $\set{g_n}_{n\geq 0}$ defined by 
$$g_n:=\mat{\pn}{\pnm}{\qn}{\qnm} \text{ for all } n\geq 0,$$
where $\spn_{n\geq -1}$, $\sqn_{n\geq -1}$ are as before. As noted before, $\pn\qnm-\pnm\qn =(-1)^{n+1}$, for all $n\geq 0$; thus, each $g_n$ has determinant $\pm 1$. 

We denote by $\M(2,\C)$ the space of all $2\times 2$ matrices with entries in $\C$. For any discrete subring $\Gamma$ as above, and any $k\in \N$, we denote by $\M(2, \Gamma)$ and $\M(2, k\inv\Gamma)$ the subgroups consisting of elements $X$ whose entries are in $\Gamma$ and $k\inv \Gamma$ respectively. For any matrix $X$ we denote by $X^t$ the transpose of $X$. For $X\in \M(2,\C)$ we denote by $X^\sigma$ the matrix obtained by applying $\sigma$ to each entry of $X$. We say that $X\in \M(2,\C)$ is \emph{$\sigma$-symmetric} if $X^t=X^\sigma$; thus, $X$ is $\sigma$-symmetric if it is symmetric and $\sigma$ is the identity map or Hermitian symmetric and $\sigma$ is the complex conjugation map. 

For $X=\mat{A}{B}{C}{D}\in \M(2,\C)$, by the \emph{$\sigma$-form} corresponding to $X$ we mean the function $f:\C \times \C \to \C$ defined by, for all $\xi, \eta\in\C$,
$$f(\xi, \eta)=A \xi^{\sigma}\xi +B\xi^{\sigma}\eta +C\eta^{\sigma}\xi +D\eta^\sigma \eta.$$ 
We note that $f(\xi, \eta)$ is the entry of the $1\times 1$ matrix $(\xi, \eta)^\sigma X (\xi, \eta)^t$. 
By a \emph{nontrivial zero} of the $\sigma$-form $f$ we mean $(\xi,\eta)^t \in \C^2$ such that $\eta \neq 0$, $\xi/\eta \in \C'$ and $f(\xi, \eta)=0$. 

\subsection{A theorem for $\sigma$-symmetric forms}

For any $g\in \GL(2,\C)$ and $X\in \M(2,\C)$ let $X_{g,\sigma}=(g^t)^\sigma Xg$. We prove the following:

\begin{theorem}\label{finmat}
	Let $X\in\M(2,\C)$ be a $\sigma$-symmetric matrix, $f$ the corresponding $\sigma$-form and let $(\xi, \eta)^t\in \C^2$ be a nontrivial zero of $f$. 
	Let $\set{g_n}_{n\geq 0}$ be the sequence in $\GL(2,\Gamma)$ associated with a continued fraction expansion of $\xi/\eta$ and $N\subseteq\N$ be a neat infinite subset for the expansion. Then $\set{X_{g_n,\sigma} \mid n\in N}$ is a bounded subset in $\M(2,\C)$. Moreover, for any $X\in \M(2, k\inv \Gamma)$, where $k\in \N$, $\set{X_{g_n,\sigma} \mid n\in N}$ is finite. 
\end{theorem}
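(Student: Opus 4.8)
The plan is to work directly with the four entries of the matrix $Y_n := X_{g_n,\sigma} = (g_n^\sigma)^t X g_n$. Writing the columns of $g_n$ as $c_1 = (\pn,\qn)^t$ and $c_2 = (\pnm,\qnm)^t$, the $(i,j)$ entry of $Y_n$ is $(c_i^\sigma)^t X c_j$; in particular the diagonal entries are the values $f(\pn,\qn)$ and $f(\pnm,\qnm)$ of the $\sigma$-form. Since $X$ is $\sigma$-symmetric, a short check shows $Y_n$ is again $\sigma$-symmetric, so its two off-diagonal entries are $\sigma$-conjugate to one another. It therefore suffices to bound $f(\pn,\qn)$, $f(\pnm,\qnm)$ and one off-diagonal entry.

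For the diagonal entries the key point is that $u := (z,1)^t$, with $z = \xi/\eta$, is a null vector of $f$: because $(\xi,\eta)$ is a nontrivial zero, $(u^\sigma)^t X u = f(z,1) = 0$. First I would write $c_1 = \qn u + e_1$ with $e_1 = (\pn - \qn z,\,0)^t = (-\deln/\qn,\,0)^t$, and similarly $c_2 = \qnm u + e_2$ with $e_2 = (-\delnm/\qnm,\,0)^t$. Expanding $f(\pn,\qn) = (c_1^\sigma)^t X c_1$, the leading term $\qn^\sigma\qn\,(u^\sigma)^t X u$ vanishes, and each of the three remaining terms is a fixed coefficient (depending only on the entries of $X$ and on $z$) times $\deln$, $\deln^\sigma$, or $\deln\deln^\sigma/(\qn\qn^\sigma)$. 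Since $N$ is neat, Corollary~\ref{theta2} bounds $\set{\deln \mid n\in N}$ and $\set{\delnm \mid n\in N}$, so both diagonal entries stay bounded for $n\in N$; the argument for $f(\pnm,\qnm)$ is identical using $c_2$, $e_2$, and $\delnm$.

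The main obstacle is the off-diagonal entry. Expanding $(c_1^\sigma)^t X c_2$ by the same substitution produces, alongside genuinely bounded terms, a contribution of the form $(\text{const})\cdot\delnm\cdot(\qn^\sigma/\qnm)$, whose size is comparable to $|\qn|/|\qnm|$ (equivalently to $|\znp|$); the neat condition controls neither of these, and they can be arbitrarily large. The plan is to bypass this term entirely by an invariance argument rather than estimate it. Since $\det g_n = \pn\qnm - \pnm\qn = \pm 1$ is fixed by $\sigma$, we obtain $\det Y_n = (\det g_n)^\sigma(\det X)(\det g_n) = \det X$, independent of $n$. Writing $Y_n = \mat{\mu_n}{\nu_n}{\nu_n^\sigma}{\mu\nm}$ with $\mu_n = f(\pn,\qn)$, this gives $\nu_n\nu_n^\sigma = \mu_n\mu\nm - \det X$, hence $|\nu_n|^2 = |\mu_n\mu\nm - \det X|$ using $|\nu_n^\sigma| = |\nu_n|$. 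The diagonal bounds from the previous step then bound $|\nu_n|$, and therefore the whole matrix $Y_n$, for $n\in N$. I expect this determinant-invariance step to be the crux, since it is precisely what converts an a priori unbounded off-diagonal entry into a quantity controlled purely by the (already bounded) diagonal.

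Finally, for $X\in\M(2,k\inv\Gamma)$ I would observe that each admissible ring $\Gamma$ is invariant under $\sigma$ (both the identity and complex conjugation map $\Gamma$ into itself), so $g_n^\sigma\in\M(2,\Gamma)$ and hence $Y_n = (g_n^\sigma)^t X g_n\in\M(2,k\inv\Gamma)$. As $\Gamma$, and therefore $k\inv\Gamma$, is discrete, any bounded set of matrices with entries in $k\inv\Gamma$ is finite; combined with the boundedness established above, $\set{Y_n \mid n\in N}$ is finite.
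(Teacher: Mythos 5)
Your proposal is correct and follows essentially the same route as the paper's proof: bound the diagonal entries $f(\pn,\qn)$ and $f(\pnm,\qnm)$ via the null-vector substitution and the boundedness of $\set{\deln\mid n\in N}$, $\set{\delnm\mid n\in N}$ from Corollary~\ref{theta2}, then recover the off-diagonal entry from the invariance $\det(X_{g_n,\sigma})=\det X$ together with $\sigma$-symmetry, and deduce finiteness from discreteness of $k\inv\Gamma$. Your explicit identification of the determinant step as the crux (and why a direct estimate of the off-diagonal entry fails) matches the paper exactly.
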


\begin{proof}
	Let $X= \mat{A}{B}{C}{D}\in \M(2,\C)$ be given; thus for any $\xi, \eta \in \C$ we have $f(\xi,\eta) =A \xi^{\sigma}\xi +B\xi^{\sigma}\eta +C\eta^{\sigma}\xi +D\eta^\sigma \eta$. For any $g\in \GL(2,\C)$, let $f^g:\C^2\to \C$ denote the function defined by $f^g((\xi,\eta)^t):=f(g(\xi,\eta)^t)$. Thus $f^g$ is the $\sigma$-form corresponding to $X_{g,\sigma}$. Let
	$$f^{g_n}(\xi, \eta)= A_n \xi^{\sigma}\xi + B_n\xi^{\sigma}\eta + C_n\eta^{\sigma}\xi + D_n\eta^\sigma \eta$$ for all $\xi, \eta$, where $A_n, B_n, C_n, D_n$ are the entries of $X_{g_n,\sigma}$, namely, $X_{g_n,\sigma}=\mat{A_n}{B_n}{C_n}{D_n}$. 
	\\Substituting for $g_n$ as $\mat{\pn}{\pnm}{\qn}{\qnm}$ 
	we see that 
	$$A_n=f(\pn,\qn) \text{ and } D_n=f(\pnm,\qnm)  \text{ for all }n\geq 0.$$
	Now let $z=\xi/\eta$ and $\deln=\qn^2(z-\frac{\pn}{\qn})$ be the sequence of relative errors for the continued fraction expansion as in the hypothesis. Thus we have 
	$\frac{\pn}{\qn}=z-\frac{\deln}{\qn^2}$ for all $n\geq 0$. Hence 
	$$A_n=f(\pn,\qn)=\qn^2\,f\left(\frac{\pn}{\qn}, 1\right)=\qn^2\,f\left(z-\frac{\deln}{\qn^2}, 1\right).$$
	On substituting in the expression for $f$, the values $\xi=z-\frac{\deln}{\qn^2}$ and $\eta =1$, the preceding equation yields
	$$\qn^{-2}A_n= A\left(z-\frac{\deln}{\qn^2}\right)^\sigma \left(z-\frac{\deln}{\qn^2}\right)+B\left(z-\frac{\deln}{\qn^2}\right)^\sigma +C\left(z-\frac{\deln}{\qn^2}\right)+D.$$ 
	As $f(z,1)=f(\xi,\eta)=0$, it follows that 
	$$\qn^{-2}A_n= A\left((\gamma_n^\sigma+\gamma_n)z+\gamma_n^\sigma\gamma_n\right)
	+ B\gamma_n^\sigma + C\gamma_n,$$ where $\gamma_n=-\deln/\qn^2$.
	We have $\abs{\gamma_n}=\abs{\deln}/\abs{\qn}^2$ and $\abs{\gamma_n^{\sigma}\gamma_n}\leq \abs{\deln}^2/\abs{\qn}^2$, as $\abs{\qn}\geq 1$. 
	Thus we get that 
	$$|A_n| \leq \left(|2Az|+|B|+|C|\right)|\deln|+|A|\,|\deln|^2, \text{ for all } n\geq 0.$$
	Also, 
	$$|D_n|= |f(\pnm,\qnm)| \leq (|2Az|+|B|+|C|)\,|\delnm|+|Az|\,|\delnm|^2,$$ for all $n\geq 0$.
	
	Now let $N$ be a neat subset of $\N$ for the continued fraction expansion as in the hypothesis. Then by Corollary~\ref{theta2} $\set{\deln \mid n\in N}$ and $\set{\delnm \mid n\in N}$ are bounded subsets of $\C$.
	Hence the above conclusions imply that $\set{A_n\mid n\in N}$ and $\set{D_n\mid n\in N}$ are bounded. 
	
	Now recall that by hypothesis $X$ is $\sigma$-symmetric. Therefore $X_{g_n,\sigma}$ is also $\sigma$-symmetric, and in particular $C_n= B_n^\sigma$ for all $n$ since $\det (g_n)=\pm 1$. Thus the determinants of $X$ and $X_{g_n,\sigma}$ are, respectively, $AD-BB^\sigma $ and $A_n D_n- B_n B_n^\sigma$, for all $n$. From the definition of $X_{g_n,\sigma}$ we see that $\det(X_{g_n,\sigma})=\det (X)$ for all $n$. Hence we get that $B_nB_n^\sigma =A_nD_n-AD +BB^\sigma $. 
	Since $\set{A_n \mid n\in N}$ and $\set{D_n \mid n\in N}$ are bounded, the preceding conclusion shows that for either choice of $\sigma$, the sequence $\set{|B_n|^2\mid n\in N}$ is bounded. Thus $\set{B_n\mid n\in N}$ is bounded, and since $C_n= B_n^\sigma$ for all $n$, it follows also that $\set{C_n\mid n\in N}$ is bounded. This completes the proof of the first assertion in the theorem. The second assertion is immediate from the first and the discreteness of $k\inv\Gamma$.
\end{proof}

\begin{remark}\label{thebound}
	A perusal of the proof of Theorem~\ref{finmat} shows, in the notation of the theorem, that all entries of $X_{g_n,\sigma}$, $n\in N$, are bounded by a computable constant depending only on $X$ and $\sup\,\set{|\deln|\mid n\in N}.$
\end{remark}

We shall next apply the theorem to study zeroes of $\sigma$-forms. Towards this we first note here the following, in the notation as above. 

\begin{lemma}\label{lem}
	Let $X\in\M(2,\C)$ and let $f$ be the corresponding $\sigma$-form. Let $z\in \C'$, $\szn_{n\geq 0}$ be the iteration sequence of a continued fraction expansion of $z$ and let $\set{g_n}_{n\geq 0}$ be the associated sequence of matrices in $\GL(2,\C)$. For $n\in \N$, let $f_n$ be the $\sigma $-form corresponding to $X_{g_n,\sigma}$. Then $f(z,1)=0$ if and only if $f_n(\znp,1)=0$.
\end{lemma}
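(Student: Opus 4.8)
The plan is to exploit the covariance of the $\sigma$-form under the matrix $g_n$, together with the relation between $g_n$, $\znp$ and $z$ furnished by Proposition~\ref{lazy1}\eqref{lazy1.3}. As noted in the proof of Theorem~\ref{finmat}, $f_n$ is precisely the $\sigma$-form $f^{g_n}$ attached to $X_{g_n,\sigma}$, so that $f_n((\xi,\eta)^t)=f(g_n(\xi,\eta)^t)$ for every $(\xi,\eta)^t\in\C^2$. Evaluating at $(\znp,1)^t$, I would first compute directly
\[ g_n\vect{\znp}{1}=\vect{\pn\znp+\pnm}{\qn\znp+\qnm}. \]

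The key step is to recognize the right-hand side as a scalar multiple of $(z,1)^t$. Setting $\lambda_n:=\qn\znp+\qnm$, Proposition~\ref{lazy1}\eqref{lazy1.3} gives $\pn\znp+\pnm=(\qn\znp+\qnm)z=\lambda_n z$, so that $g_n(\znp,1)^t=\lambda_n(z,1)^t$. In other words $g_n$ carries the direction $(\znp,1)^t$ onto the line through $(z,1)^t$, reflecting that $g_n$ sends $\znp$ to $z$ under the associated M\"obius action. I would then invoke the homogeneity of the $\sigma$-form: since $\sigma$ is multiplicative, one has $(\lambda\zeta)^\sigma=\lambda^\sigma\zeta^\sigma$ and hence $f(\lambda\xi,\lambda\eta)=\lambda^\sigma\lambda\, f(\xi,\eta)$ for all $\lambda\in\C$. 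Applying this with $\lambda=\lambda_n$ yields
\[ f_n(\znp,1)=f\big(g_n(\znp,1)^t\big)=f(\lambda_n z,\lambda_n)=\lambda_n^\sigma\lambda_n\, f(z,1). \]

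It remains only to check that the scalar factor $\lambda_n^\sigma\lambda_n$ is nonzero, which is the single point needing an argument. Since $g_n\in\GL(2,\C)$ is invertible and $(\znp,1)^t\neq 0$, the vector $g_n(\znp,1)^t=\lambda_n(z,1)^t$ is nonzero; as its second coordinate is $\lambda_n$, necessarily $\lambda_n\neq 0$, whence $\lambda_n^\sigma\lambda_n\neq 0$ for either choice of $\sigma$ (equalling $|\lambda_n|^2$ or $\lambda_n^2$). The displayed identity then shows $f_n(\znp,1)=0$ if and only if $f(z,1)=0$, as required. I do not expect a genuine obstacle: the entire content is the covariance identity $g_n(\znp,1)^t=\lambda_n(z,1)^t$ coming from Proposition~\ref{lazy1}\eqref{lazy1.3}, after which homogeneity and the non-degeneracy of $\lambda_n$ make the conclusion immediate.
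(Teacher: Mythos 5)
Your proof is correct and follows essentially the same route as the paper's: both rest on the covariance $f_n(v)=f(g_nv)$ together with Proposition~\ref{lazy1}\eqref{lazy1.3}, which shows $g_n(\znp,1)^t$ is a nonzero multiple of $(z,1)^t$. You merely make explicit the homogeneity $f(\lambda\xi,\lambda\eta)=\lambda^\sigma\lambda f(\xi,\eta)$ and the nonvanishing of the scalar, which the paper leaves implicit.
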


\begin{proof}
	The definition of the $\sigma$-form $f$ corresponding to $X$ readily shows that $(\xi, \eta)^t$ is a zero of $f$ if and only if, for any $g\in \GL(2,\C)$, $g\inv(\xi, \eta)^t$ is a zero of the $\sigma$-form corresponding to ${X_{g,\sigma}}$. 
	On the other hand, for all $n\in \N$, $g_n(\znp, 1)^t= (\pn\znp+\pnm, \qn\znp+\qnm)^t$, and by Proposition~\ref{lazy1}\eqref{lazy1.3} it is a nonzero multiple of $(z,1)^t$. 
	Hence $(z,1)$ is a zero of $f$ if and only if $(\znp,1)$ is a zero of $f_n$. 
\end{proof}

\subsection{Application to roots of quadratic polynomials} 

Now let $\Gamma$ be a discrete subring of $\C$ and $K$ be its quotient field. Let $f(z)=az^2+bz+c$ be a quadratic polynomial, with $a,b, c\in \Gamma$ which is irreducible over $K$. It may be recalled that for a quadratic polynomials with coefficients in $\Z$ which is irreducible over $\Q$ the classical theorem of Lagrange asserts that the simple continued fraction expansions $\san$ of either of the roots is eventually periodic, namely there exist $k$ and $n_0$ such that $a_{n+k}=a_n$ for all $n\geq n_0$. Analogous results have been known in the framework as above, for complex quadratic polynomials, under various conditions on the continued fraction expansions. In the following corollary we establish a version of such a result for a larger class of continued fraction expansions, namely those admitting a neat subset. 

Let $\Gamma$ be a Euclidean subring and $K$ the quotient field. By a \emph{quadratic surd} with respect to $\Gamma$ we mean a root of a quadratic polynomial with coefficients in $\Gamma$ which is not contained in $K$. From Theorem~\ref{finmat} we get the following variant of Corollary~4.5 of \cite{DN}, for all $\Gamma$'s as above. 

\begin{corollary}\label{Lagrange}
	Let $z$ be a quadratic surd with respect to $\Gamma$. Let $\san_{n\geq 0}$ be a continued fraction expansion of $z$ and $\szn_{n\geq 0}$ the corresponding iteration sequence. Then for any neat subset $N$ for the expansion the sets $\set{\znp\mid n\in N}$ and $\set{\anp\mid n\in N}$ are finite. 
\end{corollary}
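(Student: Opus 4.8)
The plan is to derive Corollary~\ref{Lagrange} by applying Theorem~\ref{finmat} to the quadratic polynomial directly. Since $z$ is a quadratic surd, it is a root of some irreducible $az^2+bz+c$ with $a,b,c\in\Gamma$. First I would encode this polynomial as a $\sigma$-form by taking $\sigma$ to be the identity map and setting
\[
X=\mat{a}{b}{0}{c},
\]
so that the corresponding $\sigma$-form is $f(\xi,\eta)=a\xi^2+b\xi\eta+c\eta^2$, and $(z,1)^t$ is a nontrivial zero of $f$ because $f(z,1)=az^2+bz+c=0$ and $z=z/1\in\C'$. However, $X$ as written is not symmetric, so I would instead symmetrize it, replacing $X$ by $\frac{1}{2}\mat{2a}{b}{b}{2c}$ (entries in $\frac{1}{2}\Gamma$), which is $\sigma$-symmetric for $\sigma=\mathrm{id}$ and defines the same quadratic form; thus $X\in\M(2,k\inv\Gamma)$ with $k=2$.

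With this setup, Theorem~\ref{finmat} applies: for the neat infinite subset $N$, the set $\set{X_{g_n,\sigma}\mid n\in N}$ is finite. The next step is to translate finiteness of $\set{X_{g_n,\sigma}}$ into finiteness of $\set{\znp\mid n\in N}$. Here I would invoke Lemma~\ref{lem}, which tells us that $f_n(\znp,1)=0$, where $f_n$ is the $\sigma$-form corresponding to $X_{g_n,\sigma}$. Writing $X_{g_n,\sigma}=\mat{A_n}{B_n}{B_n}{D_n}$, this means $\znp$ is a root of the quadratic polynomial $A_n w^2+2B_n w+D_n$ (with the understanding that $A_n=f(\pn,\qn)\neq 0$, since $z$ is not rational over $K$ and so $f$ has no nontrivial zeros in $K$). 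Since $\set{X_{g_n,\sigma}\mid n\in N}$ is finite, only finitely many distinct polynomials $A_n w^2+2B_n w+D_n$ arise as $n$ ranges over $N$, and each such quadratic has at most two roots; hence $\set{\znp\mid n\in N}$ is finite.

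Finally, finiteness of $\set{\anp\mid n\in N}$ follows from that of $\set{\znp\mid n\in N}$: since the iteration sequence satisfies $0<|\znp-\anp|<1$ with $\anp\in\Gamma$, each value of $\znp$ forces $\anp$ to lie in the intersection of $\Gamma$ with a unit disc about $\znp$, which is a finite set because $\Gamma$ is discrete. Hence $\set{\anp\mid n\in N}$ is finite as well.

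The main obstacle I anticipate is the bookkeeping around ensuring $A_n\neq 0$ so that $\znp$ genuinely appears as a root of a nondegenerate quadratic rather than a linear equation. This requires the observation that irreducibility of $az^2+bz+c$ over $K$ means $f$ has no nontrivial zeros over $K$, so $f(\pn,\qn)=\qn^2 f(\pn/\qn,1)\neq 0$ since $\pn/\qn\in K$; thus $A_n\neq 0$ for all $n$, and the degenerate case never arises. Beyond this, the argument is a fairly direct chaining of Theorem~\ref{finmat}, Lemma~\ref{lem}, and the discreteness of $\Gamma$.
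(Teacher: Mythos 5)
Your proposal is correct and follows essentially the same route as the paper's proof: the same symmetrized matrix $X=\mat{a}{\frac{1}{2}b}{\frac{1}{2}b}{c}$ with $\sigma$ the identity, Theorem~\ref{finmat} to get finiteness of $\set{X_{g_n,\sigma}\mid n\in N}$, Lemma~\ref{lem} to realize each $\znp$ as a root of one of finitely many polynomials, and discreteness of $\Gamma$ for the $\anp$. Your extra check that $A_n=\qn^2 f(\pn/\qn,1)\neq 0$ by irreducibility is a welcome (if minor) refinement that the paper leaves implicit.
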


\begin{proof}
	Let $a,b,c\in\Gamma$ be such that $az^2+bz+c=0$. We shall apply Theorem~\ref{finmat} with $X=\mat{a}{\frac{1}{2} b}{\frac{1}{2} b}{c}$ and $\sigma$ the identity map. Then the corresponding $\sigma$-form is given by $f(\xi,\eta)=a\xi^2+b\xi\eta+c\eta^2$ for all $\xi, \eta \in \C$. 
	Since $z$ is a quadratic surd, $(z,1)$ is a nontrivial zero of $f$, and the theorem implies, in the notation as before, that $\set{{X_{g_n, Id}} \mid n\in N}$ is a finite set. Let $f_n$ denote the $\sigma$-form corresponding to $X_{g_n, Id}.$ Then we get that $\set{f_n(\zeta, 1) \mid n\in N}$ is a finite collection of polynomials in $\zeta$. By Lemma \ref{lem} each $\znp$, $n\in N$, is a root of one of the polynomials from this finite collection. It follows that $\set{\znp \mid n\in N}$ is finite. Since each $a_{n+1}$ is an element of $\Gamma$ within distance 1 from $z_{n+1}$ it follows that $\set{\anp\mid n\in N}$ is finite.
\end{proof}

It is easy to see that if $\san_{n\geq 0}$ is a continued fraction expansion of a number $z\in \C'$ and it is eventually periodic then $z$ is a quadratic surd over $K$. The main point of various versions of Lagrange's theorem concerns the converse. In this regard combining Corollary~\ref{Lagrange} with Proposition~\ref{neatsubsets} we deduce the following:

\begin{corollary}\label{Lagrange2}
	Let $z$ be a quadratic surd with respect to $\Gamma$.
	Let $\san_{n\geq 0}$ be a continued fraction expansion of $z$, $\szn_{n\geq 0}$ the corresponding iteration sequence and $\sqn_{n\geq 0}$ the associated denominator sequence. Suppose that either $\szn$ is of radius $r<1$ or $\set{|\qn|}$ is monotonic. Then we have the following:
	\begin{enumerate}[(i)]
		\item \label{Lagrange2.1} 
		 There exist $\zeta \in \C'$ and $a\in \Gamma$ such that $\zn=\zeta$ and $\an=a$ for infinitely many $n\in \N$; if, moreover, $\san$ is associated with an algorithm then $\san$ is eventually periodic. 
		\item \label{Lagrange2.2}
		If $\szn$ is of radius $r<1$ as well as the sequence $\set{|\qn|}$ is monotonic then the sets $\set{\zn\mid n\in \N}$ and $\set{\an\mid n\in \N}$ are finite. 
	\end{enumerate} 
\end{corollary}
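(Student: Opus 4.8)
The plan is to reduce both parts to the existence of a suitable neat subset together with the finiteness statement of Corollary~\ref{Lagrange}, the only genuinely new ingredients being a pigeonhole argument and, for the periodicity claim, the deterministic nature of an algorithm.

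First I would produce an infinite neat subset $N$ under either hypothesis. If $\szn$ has radius $r<1$, then $\sup_{n\geq 1}|\zn-\an|=r<1$, so in particular $\limsup_n|\zn-\an|<1$ and Proposition~\ref{neatsubsets}\eqref{neatsubsets_sup} applies. If instead $\set{|\qn|}$ is monotonic, then since $|\qn|\to\infty$ by Proposition~\ref{qntoinfinity} the sequence must be non-decreasing, i.e. $|\qnm|\leq|\qn|$ for all $n$, so Proposition~\ref{neatsubsets}\eqref{neatsubsets_mono} applies. Either way there is an infinite neat subset $N$, and Corollary~\ref{Lagrange} then gives that $\set{\znp\mid n\in N}$ and $\set{\anp\mid n\in N}$ are finite.

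For part \eqref{Lagrange2.1}, since $N$ is infinite and the set of pairs $\set{(\znp,\anp)\mid n\in N}$ lies in the finite product of the two finite sets above, some pair $(\zeta,a)$ is attained for infinitely many $n\in N$ (and $\zeta\in\C'$ automatically, being a term of the iteration sequence); reindexing by $m=n+1$ gives $\zn=\zeta$ and $\an=a$ for infinitely many $n$, which is the first assertion. For the periodicity claim I would use that for an algorithm $f$ the whole expansion is generated by iterating the single map $T(w)=(w-f(w))\inv$, so that $\znp=T(\zn)$ and $\an=f(\zn)$. Choosing two indices $m_1<m_2$ with $z_{m_1}=z_{m_2}=\zeta$, a straightforward induction gives $z_{m_1+k}=z_{m_2+k}$ for all $k\geq 0$; hence $\szn$, and therefore $\san=f(\szn)$, is eventually periodic with period $m_2-m_1$. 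For part \eqref{Lagrange2.2} the stronger hypotheses let me take $N=\N$ itself: radius $r<1$ is exactly $\sup_{n\in\N}|\zn-\an|<1$, and monotonicity of $\set{|\qn|}$ gives $|\qnm|\leq|\qn|$ for every $n$, so $\N$ is neat. Applying Corollary~\ref{Lagrange} with $N=\N$ shows $\set{\znp\mid n\in\N}$ and $\set{\anp\mid n\in\N}$ are finite, and adjoining the finitely many initial terms yields the finiteness of $\set{\zn\mid n\in\N}$ and $\set{\an\mid n\in\N}$.

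The main subtlety, rather than an obstacle, is the step tying the abstract hypothesis ``$\set{|\qn|}$ monotonic'' to the precise inequality $|\qnm|\leq|\qn|$ needed to invoke neatness and Proposition~\ref{neatsubsets}\eqref{neatsubsets_mono}; this is where Proposition~\ref{qntoinfinity} is essential, as it rules out the decreasing case. The other point requiring care is that eventual periodicity genuinely needs the algorithmic hypothesis: without a deterministic rule for passing from $\zn$ to $\znp$, the equality $z_{m_1}=z_{m_2}$ need not force the subsequent terms to agree, so in general only the weaker ``infinitely repeated value'' conclusion of \eqref{Lagrange2.1} survives.
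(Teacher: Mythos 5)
Your proposal is correct and follows essentially the same route as the paper: obtain an infinite neat subset via Proposition~\ref{neatsubsets}, invoke Corollary~\ref{Lagrange}, pigeonhole to get an infinitely repeated pair, and use the deterministic update $\anp=f(\znp)$ to bootstrap a single repetition $z_{m_1}=z_{m_2}$ into eventual periodicity, with part~(ii) handled by taking $N=\N$. Your explicit remark that monotonicity of $\set{|\qn|}$ together with $|\qn|\to\infty$ forces $|\qnm|\leq|\qn|$ is a small point the paper leaves implicit, but it is not a different argument.
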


\begin{proof} 
	\eqref{Lagrange2.1} Under the conditions as in the hypothesis, by Proposition~\ref{neatsubsets} there exists an infinite neat subset, say $N$. Then by Corollary~\ref{Lagrange} $\set{\znp \mid n\in N}$ is finite. It follows that there exists a $\zeta \in \C'$ such that $\zn=\zeta$ for infinitely many $n\in \N$. For all these $n$, $\an$ is an element of $\Gamma$ within distance 1, and hence there exists $a\in \Gamma$ such that $\an=a$ for infinitely many of them. This proves the first part of the statement in \eqref{Lagrange2.1}. 
	
	Now suppose that the continued fraction expansion is algorithmic. By the previous assertion there exist $n_0$ and $k\geq 1$ such that $z_{n_0+k}=z_{n_0}$. Since the continued fraction expansion follows an algorithm we get that $a_{n_0+k}=a_{n_0}$ and, in turn, that $z_{n_0+1+k}=z_{n_0+1}$. Recursively it follows that $a_{n+k}=a_n$ for all $n\geq n_0$, viz. that $\san$ is eventually periodic. 
	
	\eqref{Lagrange2.2} The given conditions mean that $\N$ is a neat subset for the expansion. The assertion therefore follows from Corollary~\ref{Lagrange}.
\end{proof}

\begin{remark}
	Let $z$ be a quadratic surd with respect to $\Gamma$ and $\san_{n\geq 0}$ be a general continued fraction expansion constructed as seen in \S\,\ref{Section:Prelim}. Then $\san$ need not be eventually periodic. This may be seen as follows. Let $z=\sqrt{2}$. Then we have the classical simple continued fraction expansion for $z$, with $a_0=1$ and $\an=2$ for all $n\geq 1$. The corresponding iteration sequence $\szn_{n\geq 0}$ in this case consists of $\zn=1+\sqrt{2}$ for all $n\geq 1$. We produce modified expansions as follows. Let $m\in \N$ and pick $a_m$ to be $3$. Then we get $z_{m+1}=(z_m-a_m)\inv=-1-\frac{1}{\sqrt{2}}$. We next choose $a_{m+1}=-2$ and get $z_{m+2}=(z_{m+1}-a_{m+1})\inv=2+\sqrt{2}$. Picking $a_{m+2}=3$ we get 
	$z_{m+3}=(z_{m+2}-a_{m+2})\inv=1+\sqrt{2}$. This shows that if in the sequence $\san$ with $a_0=1$ and $\an=2$ for all $n\in \N$ if we replace the triple $(2,2,2)$ at $(a_m, a_{m+1}, a_{m+2})$ by $(3,-2, 3)$ the resulting sequence is also a continued fraction expansion of $\sqrt{2}$. Making such a substitution at infinitely many places, say starting at $m=4^k$, $k=1,2, \dots$, we get a continued fraction expansion which is not eventually periodic. Corollary~\ref{Lagrange2} shows that though there may be no eventual periodicity, nevertheless in all such expansions there will be at least one entry which will be found repeated infinitely many times. 
\end{remark}

\begin{remark}
	Let $r_0=r_0(\Gamma)$, as defined in \S\,\ref{Section:Prelim}, and let $r_0<r<1$. Let $f:\C'\to \Gamma$ be the algorithm which assigns to each $z$ the farthest point in $\Gamma$ within distance $r$ from $z$ (with suitable tie-breaking convention when there are more thane one). Let $z$ be a quadratic surd and let $\san_{n\geq 0}$ be the continued fraction expansion with respect to the algorithm $f$. This would be different from the continued fraction expansion with respect to the nearest integer algorithm. However, like the latter, $\san$ will also be eventually periodic. Various other algorithms can be conceived in a similar fashion for which also the conclusion holds. 
	
\end{remark} 

\subsection{Application to roots of Hermitian quadratic polynomials}

We next apply Theorem \ref{finmat} and prove the following analogue for Hermitian quadratic polynomials, viz. functions of the form $H(z,1)$, where $H$ is a Hermitian binary form, showing that if $z\in \C'$ is a root of $H(z,1)$ then, under certain general conditions, the partial quotients $\san_{n\geq 0}$ corresponding to the continued fraction expansion of $z$ are bounded; this extends a result of Hines \cite{Hin}, where it is proved in the special case of the nearest integer algorithm. We begin by noting the following.

\begin{remark}\label{rem:zero} 
	Let $P(z)=az\bar{z}+b\bar{z}+\bar{b} z+c$ where $a,b,c\in \Gamma$ and $a,c\in \R$ be a Hermitian quadratic polynomial. If $a\neq 0$ then the set of roots of $P$ consists of the circle $\abs{z+\frac{b}{a}}^2=(|b|^2-ac)/a$ (note that $(|b|^2-ac)/a\in\Q$) if the latter is nonnegative (a degenerate circle with a single point if it is zero), and is empty if it is negative. If $a=0$ and $b\neq 0$ then the set of roots is an affine line in $\C$, containing a dense set of rational points. 
\end{remark}

\begin{corollary}\label{hbounded}
	Let $P(\zeta)=a\zeta \bar{\zeta}+b\bar{\zeta}+\bar{b}\zeta +c$, where $a,b,c\in \Gamma$ with $a, c\in \R$ be a Hermitian quadratic polynomial. Let $z\in \C'$ be a root of $P$, $\san_{n\geq 0}$ be a continued fraction expansion of $z$, $\szn_{n\geq 0}$ the associated iteration sequence, and $\spn_{n\geq -1}, \sqn_{n\geq -1}$ be the corresponding $\Qp$-pair. 
	Suppose that $\abs{\qnm}\leq\abs{\qn}$ for all $n\in\N$. Then at least one of the following holds:
	\begin{enumerate}[(i)]
		\item\label{hbounded1} there exists an increasing sequence $\set{n_k}$ in $\N$ such that ${P(p_{n_k}/q_{n_k})=0}$ for all $k$;
		\item\label{hbounded2} $\san$ is bounded.
	\end{enumerate}
	In particular if $P$ has no root in $K$, then $\san$ is bounded, and moreover, if $\sup_{n\in \N} |\deln|=M <\infty$, where $\sdeln_{n\geq 0}$ is the corresponding sequence of relative errors, then a bound for $\san$ can be given depending only on $a,b,c$ and $M$. 
\end{corollary}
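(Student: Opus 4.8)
The plan is to apply Theorem~\ref{finmat} to the Hermitian matrix $X=\mat{a}{b}{\bar b}{c}$ with $\sigma$ the complex conjugation; its $\sigma$-form $f$ satisfies $f(\zeta,1)=P(\zeta)$, so that $(z,1)^t$ is a nontrivial zero of $f$. Writing $X_{g_n,\sigma}=\mat{A_n}{B_n}{C_n}{D_n}$ as in the proof of Theorem~\ref{finmat}, the decisive observation is that $A_n=f(\pn,\qn)=\abs{\qn}^2\,P(\pn/\qn)$ is a \emph{rational integer}. Indeed $A_n$ is real, since the Hermitian form $f$ is real-valued, and it lies in $\Gamma$ because each admissible $\Gamma$ is stable under conjugation and $a,b,c,\pn,\qn\in\Gamma$; as $\Gamma\cap\R=\Z$, we get $A_n\in\Z$. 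Consequently $A_n=0$ exactly when $\pn/\qn$ is a root of $P$, and otherwise $\abs{A_n}\ge 1$.

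This integrality drives the dichotomy. If $P(\pn/\qn)=0$ for infinitely many $n$, we are in case~\eqref{hbounded1}. Otherwise $A_n\neq 0$, hence $\abs{A_n}\ge 1$, for all but finitely many $n$. By Lemma~\ref{lem} the point $\znp$ is a zero of the $\sigma$-form $f_n$ of $X_{g_n,\sigma}$, i.e. $A_n\abs{\znp}^2+B_n\overline{\znp}+C_n\znp+D_n=0$; as $C_n=\overline{B_n}$ and $\det X_{g_n,\sigma}=\det X$, this is the circle with centre $-\overline{B_n}/A_n$ and squared radius $-\det X/A_n^2=(\abs{b}^2-ac)/A_n^2\ge 0$. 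With $\abs{A_n}\ge 1$ this gives $\abs{\znp}\le\abs{B_n}+\sqrt{\abs{b}^2-ac}$, so that bounding $\znp$ reduces to bounding $B_n$.

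To control $B_n$ I would invoke Proposition~\ref{neatsubsets}\eqref{neatsubsets_mono}: since $\abs{\qnm}\le\abs{\qn}$ for all $n$, there is $\alpha>1$ for which $N=\set{n\mid\abs{\znp}>\alpha}$ is an infinite neat subset. Theorem~\ref{finmat} then bounds every entry of $X_{g_n,\sigma}$, in particular $B_n$, for $n\in N$, so the circle estimate bounds $\abs{\znp}$ for every $n\in N$ with $A_n\neq 0$. For $n\notin N$ one has $\abs{\znp}\le\alpha$ by the very definition of $N$, and the finitely many indices with $A_n=0$ contribute only finitely many finite values. Hence $\set{\zn\mid n\ge 1}$ is bounded, and since $\an=\zn-\znp\inv$ with $\abs{\znp\inv}<1$, the sequence $\san$ is bounded, which is case~\eqref{hbounded2}.

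For the final assertions, if $P$ has no root in $K$ then $\pn/\qn\in K$ is never a root, so $A_n\neq 0$ for every $n$, case~\eqref{hbounded1} is excluded, and $\san$ is bounded; this hypothesis moreover forces $a\neq 0$, since an affine line of roots ($a=0$) carries rational points by Remark~\ref{rem:zero}, so the roots lie on a genuine circle and $\abs{z}$ is bounded in terms of $a,b,c$. When in addition $\sup_n\abs{\deln}=M<\infty$, the estimates in the proof of Theorem~\ref{finmat} hold for every $n$ and bound $\abs{A_n}$ and $\abs{D_n}$ by quantities depending only on $a,b,c,M$; the relation $\abs{B_n}^2=\abs{A_nD_n-\det X}$ then bounds $\abs{B_n}$, and the circle estimate together with $\abs{A_n}\ge 1$ bounds every $\znp$, hence $\san$, in terms of $a,b,c,M$ alone. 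The main obstacle is exactly this isolation of $B_n$: Theorem~\ref{finmat} controls $X_{g_n,\sigma}$ only along $N$, not along all of $\N$, and the device that rescues the argument is to take $N$ of the form $\set{n\mid\abs{\znp}>\alpha}$, so that on its complement $\znp$ is automatically small.
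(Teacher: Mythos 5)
Your argument is correct and follows the same overall route as the paper: apply Theorem~\ref{finmat} to $X=\mat{a}{b}{\bar b}{c}$ along a neat subset of the form $N=\set{n \mid \abs{\znp}>\alpha}$ supplied by Proposition~\ref{neatsubsets}\eqref{neatsubsets_mono}, use Lemma~\ref{lem} to place $\znp$ on the zero circle of $f_n$, and handle $n\notin N$ by the very definition of $N$. The one genuine difference is how you close the circle estimate. The paper invokes the \emph{finiteness} clause of Theorem~\ref{finmat} (since $X\in\M(2,\Gamma)$), so that $\set{\znp\mid n\in N}$ lies on finitely many circles and is therefore bounded; you instead use only the \emph{boundedness} clause together with the integrality observation $A_n=f(\pn,\qn)\in\Gamma\cap\R=\Z$, which gives the uniform lower bound $\abs{A_n}\geq 1$ whenever $P(\pn/\qn)\neq 0$ and hence the explicit estimate $\abs{\znp}\leq\abs{B_n}+\sqrt{\abs{b}^2-ac}$. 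This refinement buys you something for the final quantitative assertion: the paper's claim that the bound depends only on the entries of $X_{g_n,\sigma}$ tacitly needs $\abs{A_n}$ bounded away from zero (which it extracts from finiteness of the collection), whereas your $A_n\in\Z\setminus\set{0}$ makes this explicit and lets you bound every $\znp$ directly from the estimates in the proof of Theorem~\ref{finmat} (via Remark~\ref{thebound}) without worrying about the constant $\alpha$, which in the paper's write-up is not really an absolute constant. Your separate treatment of the finitely many indices with $A_n=0$ (rather than restricting $N$ to $n\geq n'$ as the paper does) is an equivalent bookkeeping choice.
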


\begin{proof}
	Let $X=\mat{a}{b}{\bar{b}}{c}$, where $a,b,c$ are the coefficients of $P$ as in the hypothesis. Let $\sigma$ denote the complex conjugation map. Then $X$ is $\sigma$-symmetric and the corresponding $\sigma$-form is given by $f(\xi,\eta)=a\bar{\xi} \xi+b\bar{\xi} \eta+\bar{b}\bar{\eta} \xi+c\bar{\eta} \eta$, for all $\xi, \eta \in \C$. 
	Thus $(z,1)$ is a zero of $f$ and it is nontrivial since $z\in\C'$. Let $\set{g_n}_{n\geq 0}$ be the sequence of matrices associated with the continued fraction expansion as in the hypothesis. 
	Consider any neat infinite subset $N$ for the expansion; by Proposition~\ref{neatsubsets}\,\eqref{neatsubsets_mono} such subsets exist. Then, in the notation as before, by Theorem \ref{finmat} we get that $\set{{X_{g_n, \sigma}} \mid n\in N}$ is a finite collection of matrices. For $n\in \N$, let $f_n$ denote the $\sigma$-form corresponding to $X_{g_n, \sigma}$ and let $\varphi_n$ be the Hermitian polynomial defined by $\varphi_n(\zeta)=f_n(\zeta ,1)$ for all $\zeta \in \C$. 
	Then $\set{\varphi_n\mid n\in N}$ is a finite collection of polynomials in $\zeta$, and by Lemma \ref{lem} each $\znp$, $n\in N$, is a root of $\varphi_n$. We shall use this observation to prove the assertion in the corollary. 
	
	Suppose that statement \eqref{hbounded1} does not hold. Then there exists $n'\in \N$ such that for $n\geq n'$, $P(\pn/\qn)\neq 0$.
	Since $\set{|\qn|}$ is monotonic, as seen in the proof of Proposition \ref{neatsubsets}\eqref{neatsubsets_mono}, there exists $\lambda>1$ such that the subset $N$ defined by $N=\set{n\in\N \mid n\geq n' \text{ and }|\znp|>\lambda}$
	is an infinite neat subset for the expansion; and thus in the notation as above $\set{f_n}_{n\in N}$ is a finite collection. 
	As recalled in the proof of Theorem~\ref{finmat}, in the expression for $f_n$ in terms of $\zeta$ and $\bar{\zeta}$, the coefficient of $z\bar{z}$ is $f(\pn,\qn)$. Hence for $n\geq n'$ the leading coefficient of $\varphi_n$ is nonzero and therefore by Remark~\ref{rem:zero}, the set of roots of $\varphi_n$, if nonempty, consists of a circle. Since any $\znp$, $n\in N$ is a root of $\varphi_n$ and $\set{\varphi_n\mid n\in N}$ is a finite collection, it follows that $\set{\znp\mid n\in N}$ is contained in a union of finitely many circles in $\C$ and hence form a bounded subset of $\C$. On the other hand, for $\znp$, for $n\notin N$, by the definition of $N$ we have either $n< n'$ or $|\znp|\leq \lambda$. Altogether, we get that $\set{\zn\mid n\in \N}$ is bounded. Since for all $n$, $|\an|\leq |\zn|+1$ it follows that $\san$ is bounded in $\C$, which shows that statement\,\eqref{hbounded2} holds; this proves the first assertion in the corollary. 
	
	Finally suppose that $\sup_{n\in \N}|\deln| =M<\infty$. We recall that for $n\in N$, $\znp$ is contained in the circle defined by $\varphi_n(\zeta)=0$, and hence $|\znp|$ admits a bound depending only on the coefficients of $f_n$, namely the entries of $X_{g_n, \sigma}$. Hence by Remark~\ref{thebound}, $\set{\znp \mid n\in N}$ admits a bound depending only on $a,b,c$ and $M$, and hence so does $\set{\zn \mid n\geq 0}$, as $\lambda$ is an absolute constant. Since $|\an|\leq |\zn|+1$ for all $n$, this proves the last statement in the corollary. 
\end{proof}

\begin{remark}\label{rem:hbounded}
	Corollary~\ref{hbounded} generalizes a result of Hines \cite{Hin} proved in the special case when $\Gamma$ is the ring of Gaussian integers, the continued fraction expansion is with respect to the nearest integer algorithm, and $P$ has no root in $K$. In general, the partial quotients in the continued fraction expansion of a $z\in \C'$ being bounded relates to $z$ being ``badly approximable" by elements of $K$. This connection was discussed in \cite{Hin} in the case of expansions with respect to the nearest integer algorithm, for the ring of Gaussian integers.
\end{remark}

\subsection{Circles of badly approximable numbers}
We recall that a complex number $z$ is said to be \emph{badly approximable}, with respect to a discrete subring $\Gamma$, if there exists a $\delta >0$ such that $\abs{z-\frac{p}{q}}\geq \delta /|q^2|$, for all $p,q\in \Gamma$, $q\neq 0$. Clearly if $z\in\C$ is badly approximable with respect to $\Gamma$ then $z\in\C'$. It is proved in \cite{Hin} that a number $z\in \C$ is badly approximable if and only if the partial quotients in its continued fraction expansion with respect to the nearest integer algorithm form a bounded sequence; we have proved an analogous result for a large class of continued fraction expansions, which we propose to publish separately. 

In \cite{Hin} it was deduced, from the case of Corollary~\ref{hbounded} for the nearest integer algorithm, that there exist circles in $\C$ all whose points are badly approximable with respect to $\Gamma$. We describe here an explicit class of such circles. For $\zeta \in \C$ and $r>0$ we shall denote by $C(\zeta ,r)$ the circle with centre at $\zeta$ and radius $r$, viz. $C(\zeta ,r)=\set{z \in \C \mid |z-\zeta|^2=r^2}$. We note that when $\zeta\in K$ and $r^2\in\Q$, the circle
$C(\zeta ,r)$ consists of zeros of Hermitian quadratic polynomial with coefficients in $\Gamma$. 

\begin{corollary}\label{badcircles}
	Let $\Gamma $ be a Euclidean subring of $\C$ and $K$ be the quotient field of~ $\Gamma$. Let $\zeta \in K$ and $r>0$ such that $r^2\in \Q$. Then the following statements are equivalent:
	
	\begin{enumerate}[(i)]
		\item \label{notbat}
		There exists $z\in C(\zeta,r)$ which is not badly approximable with respect to~$\Gamma$.
		\item \label{norms-ratio}
		If $r^2=\frac{m}{n}$ with $m,n\in \N$ coprime to each other, then $m, n$ are norms of some elements of~$\Gamma$. 
	\end{enumerate}
\end{corollary}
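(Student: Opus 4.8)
The plan is to pivot the whole equivalence on the single geometric condition that the circle meets $K$, proving (i) $\iff C(\zeta,r)\cap K\neq\emptyset\iff$ (ii). First I would clear denominators: because $\zeta\in K$ and $r^2\in\Q$, multiplying $|w-\zeta|^2=r^2$ by a suitable positive integer turns it into $P(w)=0$ for a Hermitian polynomial $P(w)=aw\bar w+b\bar w+\bar b w+c$ with $a,c\in\Z$, $b\in\Gamma$, $a\neq 0$, whose set of zeros is exactly $C(\zeta,r)$; completing the square gives the identity $P(w)=a(|w-\zeta|^2-r^2)$. Thus $P$ has a root in $K$ precisely when $C(\zeta,r)\cap K\neq\emptyset$, and both (i) and (ii) will be matched against this condition.

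Next I would prove $C(\zeta,r)\cap K\neq\emptyset\iff$ (ii); this is the arithmetic core. For the direction $\Leftarrow$, if $m=N(s)$ and $n=N(t)$ with $s,t\in\Gamma$ (where $N(\gamma)=|\gamma|^2$), then $w_0:=\zeta+s/t\in K$ satisfies $|w_0-\zeta|^2=N(s)/N(t)=m/n=r^2$, so $w_0\in C(\zeta,r)\cap K$. For $\Rightarrow$, a point $w_0\in C(\zeta,r)\cap K$ yields $w_0-\zeta=s/t$ with $s,t\in\Gamma$, $t\neq 0$, whence $n\,N(s)=m\,N(t)$; since $\gcd(m,n)=1$ this forces $N(s)=mk$ and $N(t)=nk$ for some $k\in\N$, so $mk$ and $nk$ are both norms. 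To descend from here to ``$m$ and $n$ are norms'' I would invoke the classical characterization --- available because every admissible $\Gamma$ has class number one --- that a positive integer is a norm from $\Gamma$ exactly when each rational prime inert in $K$ divides it to an even power (this is Fermat's two--squares theorem when $\Gamma=\Z[i]$). An inert prime has even exponent in both $mk$ and $nk$; as $\gcd(m,n)=1$ it divides at most one of $m,n$, and a short valuation count then shows its exponent in each of $m$ and $n$ is even. Hence $m$ and $n$ are norms, i.e.\ (ii) holds. I expect this descent to be the main obstacle, since it is exactly where the arithmetic of the individual rings enters.

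It remains to connect (i) with the condition $C(\zeta,r)\cap K\neq\emptyset$. If the circle contains a point $w_0\in K$, then $w_0=p/q$ for some $p,q\in\Gamma$, so $|w_0-p/q|=0$ and $w_0$ is not badly approximable; thus (i) holds. For the converse I would show, contrapositively, that $C(\zeta,r)\cap K=\emptyset$ forces \emph{every} point of the circle to be badly approximable, with a single constant $\delta$ working for all of them. The key is an integrality observation: for $p,q\in\Gamma$ with $q\neq 0$,
$$|q|^2P(p/q)=a|p|^2+b\bar p q+\bar b p\bar q+c|q|^2$$
is a rational integer, because $a,c\in\Z$ and $b\bar p q+\bar b p\bar q=\gamma+\bar\gamma$ with $\gamma=b\bar p q\in\Gamma$. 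When $C(\zeta,r)\cap K=\emptyset$ this integer is nonzero, so $|P(p/q)|\geq|q|^{-2}$. On the other hand, fixing $z\in C(\zeta,r)$ and using $|z-\zeta|=r$ together with $P(w)=a(|w-\zeta|^2-r^2)$, a difference-of-squares factorization and the reverse triangle inequality give $|P(p/q)|\leq|a|\,(|p/q-\zeta|+r)\,|z-p/q|$. Combining the two bounds and splitting on whether $|z-p/q|\leq 1$ yields $|z-p/q|\geq\delta/|q|^2$ with $\delta=\min\{1,(|a|(2r+1))^{-1}\}$, uniformly in $z$. Hence every point of the circle is badly approximable, so (i) fails, completing the contrapositive.

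Putting the three matchings together gives (i) $\iff C(\zeta,r)\cap K\neq\emptyset\iff$ (ii), as required. I note that the badly-approximable half could alternatively be routed through Corollary~\ref{hbounded}: when $P$ has no root in $K$ that result forces the partial quotients to be bounded, which is the usual gateway to bad approximability. The direct integral estimate above is preferable, however, since it avoids appealing to any criterion translating bounded partial quotients into bad approximability and furnishes an explicit, $z$-independent constant.
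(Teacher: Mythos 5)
Your proof is correct, but it takes a genuinely different route from the paper's in both of its substantive halves. For the link between bad approximability and $C(\zeta,r)\cap K=\emptyset$, the paper does not argue directly: it invokes Corollary~\ref{hbounded} (bounded partial quotients) together with Remark~\ref{rem:hbounded} and the equivalence ``bounded partial quotients $\iff$ badly approximable,'' a result the authors state they will publish separately. Your direct Liouville-type estimate --- the integrality of $|q|^2P(p/q)$ giving $|P(p/q)|\geq |q|^{-2}$, combined with $P(w)=a(|w-\zeta|^2-r^2)$ and the reverse triangle inequality --- is self-contained, avoids that external criterion entirely, and produces an explicit uniform constant $\delta$; this is a real gain in elementarity, at the cost of bypassing the continued-fraction machinery the paper is showcasing. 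For the arithmetic core, the paper argues from $np\bar p=mq\bar q$ that ``by unique factorization $m$ and $n$ are equivalent to $p\bar p$ and $q\bar q$ modulo units''; this is terse and, as stated, overlooks the possible common factor ($np\bar p=mq\bar q$ with $\gcd(m,n)=1$ only yields $p\bar p=mk$, $q\bar q=nk$). Your descent via the characterization of norms by parity of exponents of inert primes handles exactly that point and is the more careful argument, though it imports the splitting-of-primes description of norm forms in the five class-number-one rings rather than a bare appeal to unique factorization. The easy direction (norms $\Rightarrow$ a $K$-point on the circle) and the reduction of (i) to the geometric condition are essentially the same in both treatments.
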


\begin{proof}
	Since $\zeta \in K$, and $r^2\in \Q$, $C(\zeta,r)$ is the set of zeroes of a Hermitian quadratic polynomial with coefficients in $\Gamma$. Hence by Corollary~\ref{hbounded} and Remark \ref{rem:hbounded} all the points of $C(\zeta,r)$ are badly approximable with respect to $\Gamma$ if and only if $C(\zeta,r)\cap K=\emptyset$. We further note that, as $\zeta \in K$, $C(\zeta,r)\cap K=\emptyset$ if and only if $C(0,r)\cap K=\emptyset $. If $r^2=m/n$, where $m$ and $n$ are norms of some elements of~$\Gamma$, then there exist $p, q\in \Gamma$ such that $r^2=|p|^2/|q|^2$ and hence $p/q\in C(0,r)$, so $C(0,r)\cap K$ is nonempty. To complete the proof it now suffices to show that if $C(0,r)\cap K$ is nonempty and $r^2=m/n$, where $m, n\in \N$ are coprime, then $m$ and $n$ are norms of some elements of~$\Gamma$. Suppose $C(0,r)\cap K$ is nonempty and let $p,q\in \Gamma$ be such that $p/q\in C(0,r)$. Then $p\bar{p}=r^2 q\bar{q}$. Let $m, n \in \N$ be coprime integers such that $r^2=m/n$. Then we have $np\bar{p} = mq\bar{q}$.
	Since $m$ and $n$ are coprime, by the unique factorization property of $\Gamma$ we get that $m$ and $n$ are equivalent, respectively, to $p\bar{p}$ and $q\bar{q}$ modulo units, and hence $m$ and $n$ are the norms of $p$ and $q$, respectively. 
\end{proof}

\begin{remark}\label{rem3.12}
	We recall here some congruence conditions on $n\in \N$ which ensure that $n$ is not a norm of any element of $\Gamma$, where $\Gamma$ is one of the rings as above. The norms of elements of $\Z[i]$ and $\Z[\sqrt{2} i]$ are of the form $k^2+l^2$ and $k^2+2l^2$ respectively, for some $k, l\in \N$, and it follows that if $n\equiv -1\pmod{8}$ then $n$ is not a norm of any element in $\Z[i]$ or $\Z[\sqrt{2} i]$. Now let $\Gamma=\Z[\frac{1}{2} (1+\sqrt{j}\,i])$, where $j=3,7$ or $11$. Then the norms of all elements of $\Gamma$ are integers of the form $\frac{1}{4} (k^2+jl^2)$ for some $k, l \in \N$. Thus if $n$ is a norm of such an element, then $4n=k^2+jl^2$. for some $k, l \in \N$, so $4n\equiv k^2 \pmod{m}$, and in turn this implies that $n \pmod{j}$ is a quadratic residue modulo $j$. 
	As $-1$ is not a square modulo $j$ for the above values, it follows that if $n\equiv -1\pmod{j}$ then $n$ is not a norm of any elements of $\Z[\frac{1}{2}(1+\sqrt{j}\,i)]$, for $j=3,7$ and $11$. 
	Thus if $n$ is congruent to $-1$ modulo $1848\,(=8\times 3 \times 7\times 11)$, then by Corollary~\ref{badcircles} the circles $|z|^2=n$ and $|z|^2=\frac{1}{n}$ consists entirely of badly approximable numbers for all the $ \Gamma$'s as above. 
\end{remark}

\begin{remark}\label{rem3.13}
	
	The construction as in Remark~\ref{rem3.12} also shows that if $\Gamma$ is one the Euclidean rings as above and $K$ is the corresponding quotient field, then for any two distinct points $z,w \in \C$ there exist $\zeta \in K$ and $r>0$ such that $z$ and $w$ are contained in distinct connected components of $\C\setminus C(\zeta, r)$ and all points of $C(\zeta, r)$ are badly approximable with respect to $\Gamma$.
	
\end{remark}

A number is said to be \emph{well-approximable}, with respect to $\Gamma$, if it is not badly approximable with respect to $\Gamma$. Corollary~\ref{badcircles} in particular implies the following. 

\begin{corollary}\label{well-appr}
	Let $\Gamma$ be a Euclidean subring of $\C$. Then the set of complex numbers which are well-approximable with respect to $\Gamma$ is totally disconnected. 
\end{corollary}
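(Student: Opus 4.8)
The plan is to deduce total disconnectedness directly from the separation property recorded in Remark~\ref{rem3.13}. Write $W$ for the set of complex numbers that are well-approximable with respect to $\Gamma$. Recall that a subset of a topological space is totally disconnected provided its only connected subsets are singletons; to establish this it suffices to show that any two distinct points of $W$ can be placed in different members of a partition of $W$ into two relatively open pieces.

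First I would fix two distinct points $z, w \in W$. By Remark~\ref{rem3.13} there exist $\zeta \in K$ and $r>0$ such that every point of the circle $C(\zeta, r)$ is badly approximable with respect to $\Gamma$, while $z$ and $w$ lie in distinct connected components of $\C \setminus C(\zeta, r)$. The crucial observation is that, since each point of $C(\zeta, r)$ is badly approximable, the circle is disjoint from $W$; hence $W \subseteq \C \setminus C(\zeta, r)$.

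Next I would use that $\C \setminus C(\zeta, r)$ is the disjoint union of the open disk $D=\set{x \in \C \mid |x-\zeta| < r}$ and the open exterior $E=\set{x \in \C \mid |x-\zeta| > r}$, both open in $\C$. Intersecting with $W$, the sets $W \cap D$ and $W \cap E$ are disjoint, relatively open in $W$, and cover $W$; moreover $z$ and $w$ lie in different ones of them, since they lie in different components of $\C \setminus C(\zeta, r)$. This is precisely a separation of $W$ placing $z$ and $w$ in distinct parts, so $z$ and $w$ cannot belong to a common connected subset of $W$.

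Since $z, w$ were arbitrary distinct points of $W$, every connected subset of $W$ is a singleton, which is the assertion that $W$ is totally disconnected. I do not anticipate a genuine obstacle here: all the substantive work has already been carried out in producing the separating circles of badly approximable numbers in Remark~\ref{rem3.13} (via Corollary~\ref{badcircles} and the congruence analysis of Remark~\ref{rem3.12}). The only point requiring minor care is the topological bookkeeping, namely confirming that the interior and exterior of a circle furnish a genuine partition of $W$ into relatively open sets, which is routine.
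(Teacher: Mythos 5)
Your proof is correct and follows the same route as the paper, which simply declares the result ``immediate from Remark~\ref{rem3.13}''; you have merely supplied the routine topological bookkeeping (the interior and exterior of the separating circle give a partition of the well-approximable set into relatively open pieces) that the paper leaves implicit.
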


\begin{proof}
	The proof is immediate from Remark~\ref{rem3.13}. 
\end{proof}

\end{document}